\newtheorem{theorem}{Theorem}
\newtheorem{lemma}[theorem]{Lemma}
\newtheorem{corollary}[theorem]{Corollary}
\theoremstyle{definition}
\newcommand{\qedclaim}{\hfill $\diamond$ \medskip}
\newcommand{\loc}[1]{\zeta(#1)}
\newcommand{\floor}[1]{\left \lfloor #1 \right \rfloor}
\begin{document}

\title{The localization number of designs}\thanks{The first and second authors were supported by NSERC}

\author[A.\ Bonato]{Anthony Bonato}
\author[M. A.\ Huggan]{Melissa A. Huggan}
\author[T.\ Marbach]{Trent Marbach}
\address[A1, A2, A3]{Ryerson University, Toronto, Canada}
\email[A1]{(A1) abonato@ryerson.ca}
\email[A2]{(A2) melissa.huggan@ryerson.ca}
\email[A3]{(A3) trent.marbach@ryerson.ca}

\begin{abstract}
We study the localization number of incidence graphs of designs. In the localization game played on a graph, the cops attempt to determine the location of an invisible robber via distance probes. The localization number of a graph $G$, written $\zeta(G)$, is the minimum number of cops needed to ensure the robber's capture. We present bounds on the localization number of incidence graphs of balanced incomplete block designs. Exact values of the localization number are given for the incidence graphs of projective and affine planes. Bounds are given for Steiner systems and for transversal designs.
\end{abstract}

\keywords{localization number, incidence graphs, balanced incomplete block designs, projective planes, affine planes, Steiner systems}
\subjclass{05B05,05C57,05C51}

\maketitle

\section{Introduction}

In \emph{graph searching}, we consider simplified, combinatorial models for the detection or neutralization of an adversary's activity on a network. Such models often focus on vertex-pursuit games, where agents or cops are attempting to capture an adversary or robber loose on the vertices of a network. The players move at alternating ticks of the clock, and have restrictions on their movements or relative speed depending on the type of game played. The most studied such game is Cops and Robbers, where the cops and robber can only move to vertices with which they share an edge. The cop number is the minimum number of cops needed to guarantee the robber's capture. How the players move and the rules of capture depend on which variant is studied, and these variants are motivated by problems in practice or inspired by foundational issues in computer science, discrete mathematics, and artificial intelligence, such as robotics and network security. For a survey of graph searching see \cite{bp,by,fomin}, and see \cite{BN} for more background on Cops and Robbers.

The cop number of graphs derived from designs was studied in~\cite{BB}. Incidence graphs of projective planes are \emph{Meyniel extremal} in the sense that they have the largest conjectured cop number among connected graphs as a function of their order. Such a connection was made implicitly in the early work of \cite{frankl} (where Meyniel's conjecture first appeared) and made explicit in \cite{p}. Several Meyniel extremal families arising from incidence graphs were discovered in \cite{BB}, including those from oval designs, Denniston designs, and transversal and truncated transversal designs.

The \emph{localization game} is a variant of Cops and Robbers that has received much recent attention, and we study the game on incidence graphs of designs.  We note that our work is the first of its kind in this direction. Before we define the localization game in the next paragraph, we define a \emph{design} $\mathcal{D}=(X,\mathcal{B})$ to be a set of points $X$ and a set of blocks $\mathcal{B}$, where each block is a subset of $X$. A design is \emph{$t$-balanced} when we insist that each $t$-tuple of $X$ occurs in exactly $\lambda$ blocks of $\mathcal{B}$. The parameter $\lambda$ is called the \emph{index} of the $t$-balanced design. The {\em incidence} (or \emph{Levi}) \emph{graph} of a design $\mathcal{D}$ is the bipartite graph with vertex set $X \cup \mathcal{B}$, such that $x \in X$ is adjacent to $B \in \mathcal{B}$ if and only if $x$ lies in $B$.

In the localization game, as in Cops and Robbers, there are two players moving on a connected graph, with one player controlling a set of $k$ \emph{cops}, where $k$ is a positive integer, and the second controlling a single \emph{robber}. Unlike in Cops and Robbers, the cops play with imperfect information: the robber is invisible to the cops during gameplay. The game is played over a sequence of discrete time-steps; a \emph{round} of the game is a move by the cops together with the subsequent move by the robber. The robber occupies a vertex of the graph, and when the robber is ready to move during a round, he may move to a neighboring vertex or remain on his current vertex. A move for the cops is a placement of cops on a set of vertices (note that the cops are not limited to moving to neighboring vertices). At the beginning of the game, the robber chooses his starting vertex. After this, the cops move first, followed by the robber; thereafter, the players move on alternate time-steps. Observe that any subset of cops may move in a given round. In each round, the cops occupy a set of vertices $u_1, u_2, \ldots , u_k$ and each cop sends out a \emph{cop probe}, which gives their distance $d_i$, where $1\le i \le k$, from $u_i$ to the robber. Hence, in each round, the cops determine a \emph{distance vector} $(d_1, d_2, \ldots , d_k)$ of cop probes, which is unique up to the ordering of the cops. Note that relative to the cops' position, there may be more than one vertex $x$ with the same distance vector. We refer to such a vertex $x$ as a \emph{candidate}. For example, in an $n$-vertex clique with a single cop, so long as the cop is not on the robber's vertex, there are $n-1$ many candidates. The cops win if they have a strategy to determine, after finitely many rounds, a unique candidate, at which time we say that the cops {\em capture} the robber. If there is no unique candidate in a given round, then the robber may move again. The cops may move to other vertices in the next round resulting in an updated distance vector. The robber wins if he is never captured.

For a connected graph $G$, define the \emph{localization number} of $G$, written $\loc{G}$, to be the least integer $k$ for which $k$ cops have a winning strategy over any possible strategy of the robber (that is, we consider the worst case that the robber a priori knows the entire strategy of the cops). As placing a cop on each vertex gives a distance vector with unique value of $0$ on the location of the robber, the localization number is well-defined.

The study of the localization game is motivated by a real-world tracking problem with mobile receivers and a cell phone user. The receivers are placed in various locations, and the user is in motion and is only detectable by the strength of their signal to the receivers (measured by their distance to the receivers). The receivers, who do not know the user's location, may appear anywhere and relocate over time. The goal is to uniquely determine the location of the user. See, for example, \cite{bahl}. The localization game was first introduced for one cop by Seager~\cite{seager1,seager2} and was further studied in \cite{BK,BDELM,car}. Interestingly, the localization number is related to the metric dimension of a graph, in a way that is analogous to how the cop number is bounded above by the domination number. The \emph{metric dimension} of a graph $G$, written $\mathrm{dim}(G)$, is the minimum number of cops needed in the localization game so that the cops can win in one round; see \cite{hm,slater}. Hence, $\loc{G} \le \mathrm{dim}(G)$, but in many cases this inequality is far from tight. The bound of $\loc{G} \le \floor{\frac{(\Delta +1)^2}{4}} +1$, where $\Delta$ is the maximum degree of $G$, was shown in \cite{has}. In \cite{nisse1}, Bosek et al.\ showed that $\loc{G}$ is bounded above by the pathwidth of $G$ and that the localization number is unbounded even on graphs obtained by adding a universal vertex to a tree. They also proved that computing $\loc{G}$ is \textbf{NP}-hard for graphs with diameter 2, and they studied the localization game for geometric graphs. The centroidal localization game was considered in \cite{nisse2}, where it was proved, among other things, that the centroidal localization number (and hence, the localization number) of outerplanar graphs is at most 3.  In \cite{DFP}, the localization number was studied for binomial random graphs with diameter 2, with further work done in this direction was done in \cite{DEFMP,DFP}. Bonato and Kinnersley~\cite{BK} studied the localization number of graphs based on their degeneracy. In ~\cite{BK}, they resolved a conjecture of \cite{nisse1} relating $\loc{G}$ and the chromatic number; further, they proved that the localization number of outerplanar graphs is at most 2, and they proved an asymptotically tight upper bound on the localization number of the hypercube.

We note first that in keeping with the style used to discuss the metric dimension of incidence graphs, we abuse language and \emph{refer to the localization number of the incidence graph of a design as the localization number of the design.} For example, we reference below to the localization number of a projective plane.

The paper is organized as follows. In Section~\ref{bibd}, we provide upper and lower bounds on the localization number of balanced incomplete block designs for general $\lambda$ and in the special case of $\lambda = 1$. We present proof techniques that are used repeatedly throughout this paper. In Section~\ref{ss: projective and affine planes}, we determine the exact value of the localization number of projective and affine planes. In Section~\ref{ss: steiner systems}, we give bounds on the localization number of Steiner systems. We finish with a discussion of the localization number of transversal designs and state several open problems.

We summarize our results for the localization numbers of designs in the chart below. All the graphs in the chart are incidence graphs $G$. The columns list the design, bounds or exact values of $\zeta(G)$, and a reference to the appropriate theorem or corollary in the paper. Note that the function $f(G)$ in the first line of the chart is defined before Theorem~\ref{thm:BIBD_gen_upper}, and the value $d$ in the third line is defined in the statement of Theorem~\ref{thm:genLowerBIBD}.
\vspace{0.1in}
\begin{center}
\begin{tabular}{|l||c|c|}
\hline
\textbf{Design} & \textbf{Bounds or values} & \textbf{Reference}\\
\hline\hline
\rm{BIBD}$(v,b,r,k,\lambda)$, $2 \leq \lambda \leq r-1$ & $\zeta(G) \leq f(G) + r+1$ & Theorem~\ref{thm:BIBD_gen_upper} \\ \hline
\rm{BIBD}$(v,b,r,k,1)$ & $\zeta(G) \le 2r+k-3$ & Corollary~\ref{corrr} \\ \hline
\rm{BIBD}$(v,b,r,k,1)$, $k<r$ & $d < \zeta(G)$ & Theorem~\ref{thm:genLowerBIBD} \\ \hline
Symmetric \rm{BIBD}$(v,b,r,k,1)$ & $\zeta(G) = k$ & Theorem~\ref{thm:pp} \\ \hline
Projective plane of order $q$ & $\zeta(G) =q+1$ & Theorem~\ref{thm:pp} \\ \hline
\rm{BIBD}$(k^2, k^2 + k,k+1,k,1)$, $k \geq 3$ & $\zeta(G) =k$ or $k+1$ & Corollary~\ref{cor:near-symmetricBIBD} \\ \hline
Affine plane of order $q$ & $\zeta(G) =q$ & Theorem~\ref{affinet} \\ \hline
\rm{STS}$(v)$, $v>9$ & $\lfloor \frac{v-2}{8} \rfloor \le \zeta(G) \le \frac{v+1}{2}$ & Corollary~\ref{stsc}, Theorem \ref{stsv} \\ \hline
\rm{STS}$(v)$ & $\zeta(G) \le (1+o(1))v/3$ &  Theorem~\ref{thm:upperSTSAlon} \\ \hline
$S(3,4,v)$, $v \geq 6$ & $\zeta(G) \le v-3$ & Theorem~\ref{stsq} \\ \hline
$S(t,k,v)$ & $\zeta(G) \le (1+o(1))v/k$ & Theorem~\ref{stsss} \\ \hline
$TD(k,n)$ & $\zeta(G) \leq n+k-4$ & Theorem~\ref{td} \\ \hline
\end{tabular}
\end{center}
\vspace{0.1in}

Throughout, all graphs considered are simple, undirected, connected, and finite. For a general reference for graph theory see~\cite{West}, and for a reference about designs see~\cite{Stinson2004}.

\section{Localization number of BIBDs}\label{bibd}
Our focus in this section is to provide bounds on the localization numbers of BIBDs, in both the case $\lambda =1$ and otherwise. Throughout the paper, there are three recurring proof techniques, and we begin by discussing these.

When playing the localization game, a \emph{resolving set} of a graph is a set $S$ of vertices such that if the cops play on all the vertices of $S$ in their first turn, there is only one candidate vertex. A \emph{delayed-resolving set} of a graph is a set $S$ of vertices such that if the cops play on all the vertices of $S$, the set of candidates is an independent set. Hence, if the cops play on the same delayed-resolving set during each turn, then the robber's only technique to avoid capture is to remain stationary on the independent set.

The first technique we describe is \emph{scanning} a graph, as summarized in the following lemma.
\begin{lemma}[Scanning Lemma]\label{lem:graphscanning}
If a graph $G$ has a delayed-resolving set $S$, then $\zeta(G) \leq |S|+1$.
\end{lemma}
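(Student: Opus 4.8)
The plan is to dedicate $|S|$ of the $|S|+1$ cops to occupying the delayed-resolving set $S$ in \emph{every} round, and to use the single remaining cop to sweep through the candidates. The point is that replaying $S$ does all of the strategic work. By the defining property of a delayed-resolving set, once the cops play $S$ the set of candidates is an independent set $I$, while every vertex of $V(G)\setminus I$ is uniquely resolved. Hence if the robber ever occupies a vertex outside $I$ he is the unique candidate and is caught; and since $I$ is independent, any move the robber makes from a vertex of $I$ lands him on a vertex of $V(G)\setminus I$, where he is again caught. This is exactly the consequence recorded immediately after the definition: against cops who replay $S$ each round, the robber's only way to survive is to remain stationary on some vertex $u \in I$.

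Granting this, I would finish with the extra cop by a sweep. Fix an enumeration $v_1,v_2,\dots,v_{|I|}$ of $I$. In the $t$-th round the $|S|$ cops replay $S$ (which, by the above, keeps the robber frozen at some $u \in I$) while the extra cop probes $v_t$. If $v_t = u$, the extra cop reports distance $0$, so $u$ is the only vertex consistent with all the probes and the robber is captured. Otherwise the probe certifies $u \neq v_t$, and since the robber cannot move without being caught, $v_t$ is permanently removed from the list of candidates. Thus each round either ends the game or deletes at least one element from the finite set $I$, so within at most $|I|-1 \le |V(G)|$ rounds a single candidate remains and the robber is located. Therefore $|S|+1$ cops have a winning strategy and $\zeta(G)\le |S|+1$.

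The one step that needs genuine care — and the place I would concentrate the write-up — is the justification that replaying $S$ really does pin the robber to a stationary vertex of $I$, i.e.\ that stepping off $I$ is always detected as a unique candidate. This is precisely where both features of a delayed-resolving set are used: that the candidate set is \emph{independent} (so that no single move can carry the robber directly from one candidate vertex to another) and that everything outside $I$ is \emph{resolved} by $S$ (so that stepping off $I$ is immediately fatal). Once this dichotomy — stay put and be swept, or move and be caught — is established, the sweep itself is routine bookkeeping, and the only quantitative content is the trivial bound $|I|\le |V(G)|$ that guarantees termination.
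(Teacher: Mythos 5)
Your proof is correct and follows essentially the same strategy as the paper's: park $|S|$ cops on $S$ every round so that the robber is frozen on the independent candidate set (any move lands on a resolved vertex and any stay outside $I$ is fatal), then use the one remaining cop to sweep until it probes the robber's vertex at distance $0$. The only cosmetic difference is that you sweep over $I$ while the paper sweeps over all previously unvisited vertices of $G$; both terminate for the same reason.
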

\begin{proof}
Place $|S|$ cops on the vertices of $S$ for all cop turns, and during each cop turn, place the remaining cop on a previously unvisited vertex of $G$. After a finite number of moves, this cop will visit the vertex occupied by the robber. To avoid capture by this cop, the robber must leave his vertex before this cop is placed on his vertex. When the robber moves, he will be located by the cops.
\end{proof}

In the game of Cops and Robbers, the notion of robber territory is a useful tool, especially when studying the cop number of planar graphs and graphs on surfaces; see \cite{bm}. Roughly put, the robber territory is an induced subgraph where the robber is safe from capture. For the localization game, we define the \emph{robber territory} as follows. The robber territory is initialized to be $T_{0} = V(G)$. After the cops have moved on the cops $i$th turn, we define $T'_i$ to contain those vertices that are in $T_{i-1}$ or the neighbors of a vertex in $T_{i-1}$.  The vertices in $T'_i$ can be partitioned into classes such that each class contains exactly those vertices of $T'_i$ with identical distance vectors.  That is, if the robber moves to a class $B \subseteq T'_i$, then the cops can identify that the robber is on $B$, but cannot distinguish which of the vertices of $B$ he is on unless $B$ is a singleton. The class that the robber currently resides on is defined as $T_i$.  As the robber has perfect information, he is able to choose which of the classes of $T'_i$ is used for $T_i$.

The second technique we employ is \emph{decreasing the robber territory}. We can show that over a certain number of turns, the cops are able to ensure that the robber territory strictly decreases in size. Thus, as turns progress, the number of vertices in the robber territory will reduce and so the robber will eventually be captured.

Finally, we consider \emph{maintaining the robber territory}, where we show that if the robber territory contains a small number of vertices, then the robber can move in such a way that the robber territory will always be sufficiently large (at least size two) to avoid capture during every round. Hence, the robber wins the game.

We begin with a general result about the localization number of $2$-balanced designs with index $\lambda=1$.

\begin{theorem} \label{thm:2designs}
If a $2$-balanced design has index $\lambda=1$, then its incidence graph $G$ with vertex partition $X\cup \mathcal{B}$ satisfies $$\zeta(G) \leq |N(u)| + |N(u')| +|N(v)| -3,$$ for each $u,u' \in X$, $u\neq u'$ and $v \in \mathcal{B}$ such that $N(u)\cap N(u') = \{v\}$ and such that $|N(u)|\geq 2$ and $|N(u')|\geq 2$.
\end{theorem}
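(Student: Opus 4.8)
The plan is to exhibit a delayed-resolving set $S$ of size $|N(u)|+|N(u')|+|N(v)|-4$ and then invoke the Scanning Lemma (Lemma~\ref{lem:graphscanning}), which immediately yields $\zeta(G)\le |S|+1 = |N(u)|+|N(u')|+|N(v)|-3$. The natural candidate is
$$S = \big(N(u)\setminus\{v\}\big)\cup\big(N(u')\setminus\{v\}\big)\cup\big(N(v)\setminus\{u,u'\}\big),$$
the blocks through $u$ and through $u'$ (other than their common block $v$) together with the points of $v$ other than $u,u'$; its size is $(|N(u)|-1)+(|N(u')|-1)+(|N(v)|-2)$ as required, the three parts being disjoint because $N(u)\cap N(u')=\{v\}$. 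Since $G$ is bipartite with parts $X$ and $\mathcal{B}$, and since $|N(u)|\ge 2$ forces $S\neq\emptyset$, the parity of any single coordinate of the distance vector separates points from blocks, so no point and block ever share a distance vector; moreover $\mathcal{B}$ is an independent set. Hence it suffices to prove that the cops on $S$ resolve every point of $X$, for then the only vertices sharing a distance vector lie in $\mathcal{B}$ and form an independent set, making $S$ delayed-resolving.

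To analyse the point distances I would use two structural consequences of $\lambda=1$: the blocks through a fixed point partition the remaining points, so each point $x\neq u$ lies in a unique block of $N(u)$, its \emph{$u$-block}, and two distinct blocks meet in at most one point. For a point $x\notin\{u,u'\}$ the coordinates of $S$ then read as follows: $d(x,B)=1$ precisely for the unique $B\in N(u)\setminus\{v\}$ equal to the $u$-block of $x$, and $d(x,B)=3$ otherwise, with the analogous statement for $N(u')\setminus\{v\}$; and $d(x,p)=0$ if $x=p\in N(v)\setminus\{u,u'\}$, while $d(x,p)=2$ for every other point-cop. Since the $u$-block of $x$ equals $v$ exactly when $x\in v$, the points split into four patterns: the vertex $u$ (distance $1$ to all of $N(u)\setminus\{v\}$, distance $3$ to all of $N(u')\setminus\{v\}$); the vertex $u'$ (the mirror image); the points of $N(v)\setminus\{u,u'\}$ (distance $3$ to all block-cops, with a single $0$ among the point-cops); and every point $x\notin v$ (exactly one $1$ among $N(u)\setminus\{v\}$ and exactly one $1$ among $N(u')\setminus\{v\}$).

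The resolution follows by inspecting these four patterns. The points of $N(v)\setminus\{u,u'\}$ are pinned down by their unique zero coordinate; $u$ and $u'$ are separated from each other and from the remaining types because, using $|N(u)|\ge 2$ and $|N(u')|\ge 2$, both $N(u)\setminus\{v\}$ and $N(u')\setminus\{v\}$ are nonempty, so the all-$1$ versus all-$3$ behaviour on these sets is genuinely distinguishing. The essential case is a point $x\notin v$: its vector records the ordered pair consisting of its $u$-block and its $u'$-block, and if $x,y\notin v$ produced the same vector then $x,y$ would lie in the same two distinct blocks $B\ni u$ and $B'\ni u'$, whence $x,y\in B\cap B'$ and the $\lambda=1$ condition forces $x=y$. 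I expect this last step to be the crux: it is precisely where the absence of $v$ from the two families of blocks through $u$ and $u'$ is made harmless, by leaning on the fact that two distinct blocks meet in at most one point. The remaining work is the boundary bookkeeping, checking that the degenerate signatures arising when a family has size one still separate $u,u'$ from the points outside $v$, which is exactly where the hypotheses $|N(u)|,|N(u')|\ge 2$ are needed. With every point resolved, $S$ is a delayed-resolving set, and the Scanning Lemma completes the proof.
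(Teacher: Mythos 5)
Your proposal is correct and follows essentially the same route as the paper: you take the identical delayed-resolving set $S = \bigl(N(u)\setminus\{v\}\bigr)\cup\bigl(N(u')\setminus\{v\}\bigr)\cup\bigl(N(v)\setminus\{u,u'\}\bigr)$, verify that every vertex of $X$ is resolved (using, as the paper does, the fact that $\lambda=1$ forces two distinct blocks to meet in at most one point), and conclude via the Scanning Lemma since the unresolved candidates lie in the independent set $\mathcal{B}$. Your case analysis is in fact slightly more careful than the paper's, e.g.\ in making explicit where $|N(u)|,|N(u')|\geq 2$ is used and in recording the correct distance $3$ (not $2$) from $u$ to the blocks of $N(u')\setminus\{v\}$.
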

\begin{proof}
We use the scanning technique described above. We will show that $S=N(u) \cup N(u') \cup N(v) \setminus \{u,u',v\}$ is a delayed-resolving set of size $|N(u)| + |N(u')| +|N(v)| -4$, and the result then follows by the Scanning Lemma.
In particular, we will show that if the robber is ever on $X$, then he is identified by the cops on $S$. The remaining vertices that are not resolved by $S$ are a subset of $\mathcal{B}$, and so form an independent set as required.

If the robber is on $X \setminus N(v)$, then there is a unique path of length $2$ from $u$ to the robber $R$, which transverses some vertex $v_1$ in $N(u) \setminus\{v\}$. There is also a unique path of length $2$ from $u'$ to $R$, which transverses some vertex $v_2$ in $N(u') \setminus\{v\}$. We then have that there are exactly two cops that will probe a distance of $1$ from the robber, which are the cops on $v_1$ and $v_2$. Further, there is only one vertex in their common neighborhood (as $\lambda=1$, two blocks cannot intersect in more that one point, so the common neighborhood of two block vertices contains zero or one point), so the robber's location is discovered.

If the robber is on $N(v) \setminus\{u,u'\}$, then exactly one cop on $N(v) \setminus\{u,u'\}$ probes a distance of $0$, and the robber's location is found. Suppose that the robber is on one of $u$ or $u'$; without loss of generality, say the robber is on $u.$ All cops on $N(u)\setminus\{v\}$ probe a distance of $1$ and all cops on $N(u') \setminus \{v\}$ probe a distance of $2$, which only occurs for $u$. \end{proof}

An important family of $2$-balanced designs are \emph{balanced incomplete block designs}, written $\text{BIBD}(v,b,r,k,\lambda)$, which have a point set $X$ of size $v$ and a block set $\mathcal{B}$ of size $b$ such that the size of each block is $k$, each point occurs in $r$ blocks, and each pair of points occurs in $\lambda$ blocks. Note that $v \leq b$, $k \geq r$, $vr=bk$, and $\lambda(v-1) = r(k-1)$.  A $\text{BIBD}$ is \emph{symmetric} if $v=b$, in which case $r=k$ and $v = \lambda k(k-1) + 1$. A $\text{BIBD}$ is \emph{simple} if it has no repeated blocks. Although we will focus on the case when $\lambda=1$, there are upper and lower bounds that we find in the general case.

Let $G$ be the incidence graph of a $\text{BIBD}(v,b,r,k,\lambda)$ with vertex partitions $X$ and $\mathcal{B}$ corresponding to the points and blocks, respectively. Note that $X$ has $v$ vertices, $\mathcal{B}$ has $b$ vertices, each vertex in $X$ has degree $r$, each vertex in $\mathcal{B}$ has degree $k$, and for each $u_1,u_2 \in X$ we have $|N(u_1) \cap N(u_2)|=\lambda$. The distance between any two vertices of $X$ is always $2$, and the distance between vertices of $\mathcal{B}$ is either $2$ or $4$.

We next introduce a function that will be useful in bounding the localization number of BIBDs.
Fix some vertex $u \in X$.
Decompose the vertices of $X\setminus \{u\}$ into sets $X_1, \ldots, X_{\alpha}$ such that $\{u\}\cup X_i$ occurs in $\lambda$ blocks of $\mathcal{B}$.
If there is some $x \in X_j$, such that $\{u\}\cup X_i \cup \{x\}$ occurs in $\lambda$ blocks of $\mathcal{B}$, then $X_i$ and $X_j$ can be combined.
It follows that there is a unique partition with parts of maximum size, and we assume that $X_1, \ldots, X_{\alpha}$ is this maximum partition.
Each part in this partition will require $|X_i|-1$ extra cops in order to determine the location of the robber on $X_i$.
In total, we will require  $f(u) = \sum_{i}  (|X_i| - 1)$ additional cops if we use $u$ to create this partition.
We take $f(G) = \min_{u\in X} f(u)$, which will be the minimum number of additional cops that are required using the particular strategy we employ.

Two non-isomorphic \rm{BIBD}$(7,21,9,3,3)$ are given in Figure~\ref{fig:bibdLambda3}; see \cite{HCD}. The incidence graph of the first has $f(G) = 3$ while the second has $f(G)=1$.
Note that in general, if no triple occurs $\lambda$ times in the design, then the corresponding incidence graph $G$ has $f(G)=0$.

\begin{figure}
\begin{center}
\begin{tabular}{ccc}
\hline
{000000000111111222222}\\
{111333555333444333444}\\
{222444666555666666555}\\
\hline
\end{tabular}
\quad
\begin{tabular}{ccc}
\hline
{000000000111111222222}\\
{111333555333444333444}\\
{222444666556566566556}\\
\hline
\end{tabular}
\end{center}
\caption{Two non-isomorphic $\mathrm{BIBD}(7,21,9,3,3)$, where blocks are represented as columns.}  \label{fig:bibdLambda3}
\end{figure}

Using the function $f$, we have the following upper bound for the localization number of BIBDs.
\begin{theorem} \label{thm:BIBD_gen_upper}
If $G$ is an incidence graph of a \rm{BIBD}$(v,b,r,k,\lambda)$ with $2 \leq \lambda \leq r-1$, and function $f$ defined above, then we have that
\[ \zeta(G) \leq f(G) + r+1.
\]
\end{theorem}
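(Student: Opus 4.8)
The plan is to build a delayed-resolving set $S$ with $|S| = f(G)+r$ and then apply the Scanning Lemma (Lemma~\ref{lem:graphscanning}), which gives $\zeta(G)\le |S|+1 = f(G)+r+1$ at once. To describe $S$, fix a point $u\in X$ realizing $f(u)=f(G)$, let $X_1,\dots,X_\alpha$ be the associated maximum partition of $X\setminus\{u\}$, and set $S=N(u)\cup P$, where $P$ consists of all but one point from each part $X_i$. Since $N(u)\subseteq\mathcal B$ and $P\subseteq X$ are disjoint, we have $|S| = r + \sum_i(|X_i|-1) = r+f(G)$, as required.

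To see that $S$ is delayed-resolving, I would show that every point of $X$ receives a distinct distance vector under the probes from $S$; then the only vertices the cops fail to pin down lie in $\mathcal B$, and since $G$ is bipartite with parts $X$ and $\mathcal B$, this set of candidates is independent. The verification splits into three checks. First, parity separates points from blocks: every cop in $N(u)\subseteq\mathcal B$ is at odd distance from each point and at even distance from each block, so (as $r\ge 1$) no point and block can share a distance vector. Second, the cops on $N(u)$ distinguish points in different parts: for $p\neq u$, the cops of $N(u)$ reporting distance $1$ are exactly the blocks of $N(u)\cap N(p)$, the $\lambda$-element trace of $p$ through $u$, while $p=u$ is singled out by the all-ones reading (here $\lambda\le r-1$ guarantees every other point reports only $\lambda<r$ ones). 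Third, the cops on $P$ resolve points inside a common part: a cop on $x\in X_i$ reports distance $0$ exactly when the robber is on $x$, so the unique uncovered point of $X_i$ is the only point that matches the $N(u)$-trace of $X_i$ yet gives distance $2$ to every cop of $P$.

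The crux is the second check, which requires verifying that the partition $X_1,\dots,X_\alpha$ coming from the condition ``$\{u\}\cup X_i$ occurs in $\lambda$ blocks'' coincides with the partition of $X\setminus\{u\}$ by the trace map $p\mapsto N(u)\cap N(p)$. This holds because $|N(u)\cap N(p)|=\lambda$ for every $p\neq u$: the $\lambda$ common blocks of $\{u\}\cup X_i$ form a subset of each trace $N(u)\cap N(p)$ with $p\in X_i$, and since each such trace also has size $\lambda$ they coincide, forcing every member of a part to share a single trace; maximality of the parts then makes distinct parts carry distinct traces. Granting this correspondence, the three checks show that all of $X$ is resolved, so $S$ is a delayed-resolving set of size $f(G)+r$, and the Scanning Lemma completes the argument.
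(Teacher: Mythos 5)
Your proposal is correct and takes essentially the same route as the paper's proof: the identical delayed-resolving set $S = N(u) \cup P$ built from a vertex $u$ with $f(u)=f(G)$, the Scanning Lemma to convert $|S| = f(G)+r$ into the bound $\zeta(G) \le f(G)+r+1$, and the same case analysis (robber on $u$ detected by $r>\lambda$ ones, the part $X_i$ detected by the $\lambda$ blocks through $\{u\}\cup X_i$, and the cops on $P$ resolving within a part). Your explicit checks—the parity argument separating $X$ from $\mathcal{B}$ and the verification that the trace map $p \mapsto N(u)\cap N(p)$ induces exactly the maximum partition $X_1,\dots,X_\alpha$—merely spell out details the paper leaves implicit.
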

\begin{proof}
We use the scanning technique described above.
We construct a delayed-resolving set $S$.
Let $u\in X$ be a vertex such that $f(u)=f(G)$ and $X=\{X_i\}$ is the unique partition with parts of maximum size used to define $f(u)$.
For each $i$, we place $|X_i|-1$ vertices from $X_i$ in $S$.
We also add the $r$ vertices of $N(u)$ to $S$.
Therefore, $S$ has size $f(u) +r$, and so once we show that $S$ is a delayed-resolving set, the result then follows by the Scanning Lemma.
In particular, we will show that if the robber is ever on $X$, then he is identified by the cops on $S$.
The remaining vertices that are not resolved by $S$ are a subset $\mathcal{B}$, and so form an independent set as required.

Consider the results if we play $f(u) +r$ cops on the vertices of $S$.
If the robber is on point $u$, then all $r$ cops on $N(u)$ will probe a distance of $1$ to the robber, and so he is captured (note that $r > \lambda$).
If the robber $R$ is on $x \in X_i$, then by the properties of the design, there are $\lambda$ paths of length $2$ from $x$ to $u$, where the intermediary points of the paths are exactly the set of $\lambda$ blocks that contain $\{u\} \cup X_i \subseteq N(u)$.
Thus, the $\lambda$ cops in blocks containing $\{u\} \cup X_i$ indicate a distance of $1$ to the robber and the cops in the other blocks of $N(u)$ indicate a distance of $3$ to the robber as all other blocks in $N(u)$ do not contain any element of $X_i$ by definition of $\lambda$.
This implies that the cops know that the robber resides on one of the vertices of $X_i$. As we have also placed $|X_i|-1$ cops on the vertices of $X_i$, either one cop will probe a distance of $0$ and the robber is captured, or all the cops on $X_i$ will probe a distance of $2$ and the robber resides on the unique vertex of $X_i$ without a cop.
Hence, if the robber is on a vertex of $X,$ then he is captured immediately.
Thus, $S$ is a delayed-resolving set, which completes the proof.
\end{proof}

The \emph{degeneracy} of a graph $G$ is the maximum, over all subgraphs $H$ of $G,$ of the minimum degree of $H.$ There is an immediate lower bound for $\text{BIBD}$s with general $\lambda$, based on degeneracy results from \cite{BK}.

\begin{theorem} \label{thm:BIBD_gen_lower}
If $G$ is the incidence graph of a \rm{BIBD}$(v,b,r,k,\lambda)$, then we have that $$\zeta(G) \geq \log_2(k).$$
\end{theorem}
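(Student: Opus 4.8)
The plan is to obtain this inequality as an immediate consequence of the degeneracy-based lower bound on the localization number proved in \cite{BK}, namely that $\zeta(G) \geq \log_2(\mathrm{degen}(G))$ for every connected graph $G$, where $\mathrm{degen}(G)$ denotes the degeneracy defined just above the statement. With that tool in hand, the entire task reduces to showing that the incidence graph of a $\mathrm{BIBD}(v,b,r,k,\lambda)$ has degeneracy at least $k$.

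To bound the degeneracy from below it suffices to exhibit a single subgraph with minimum degree at least $k$, and the most economical choice is $G$ itself. Recall that $G$ is bipartite with parts $X$ and $\mathcal{B}$, where every point-vertex has degree $r$ and every block-vertex has degree $k$. Using the standard relations $b \geq v$ (Fisher's inequality) and $vr = bk$, we get $r \geq k$. Hence every vertex of $G$ has degree at least $k$, so $G$ is its own $k$-core and $\mathrm{degen}(G) \geq k$. Substituting this into the inequality from \cite{BK} yields $\zeta(G) \geq \log_2(\mathrm{degen}(G)) \geq \log_2(k)$, as required. (Note that if instead $r < k$ the bound would degrade to $\log_2 r$, since any edge-containing subgraph must use a point-vertex of degree at most $r$; it is precisely the relation $r \geq k$ that makes the block side the bottleneck and produces $\log_2 k$.)

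I expect no genuine obstacle: this is a true corollary, and the only point requiring any care is the degeneracy estimate, which rests on the elementary relation $r \geq k$. It is worth recording informally why a logarithm is the natural output of the degeneracy machinery here. A cop stationed on a block-vertex $w$ reports distance $1$ to the points of a fixed block that lie in $w$ and a strictly larger distance to the remaining points, so each such cop effects essentially a binary split of a block's $k$ points; separating all $k$ of them therefore forces on the order of $\log_2 k$ cops. This heuristic is a one-round, metric-dimension-style observation, whereas the localization number is a multi-round quantity, and it is exactly the multi-round strengthening of this idea that the degeneracy bound of \cite{BK} supplies rigorously.
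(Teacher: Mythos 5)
Your proposal is correct and takes essentially the same route as the paper, whose entire proof is the observation that the incidence graph is bipartite with degeneracy $k$ followed by an appeal to Theorem~2.3 of \cite{BK}; your added detail that $r \ge k$ (from Fisher's inequality $b \ge v$ together with $vr = bk$) is exactly the standard BIBD relation underlying that degeneracy claim. The one thing to adjust is your statement of the black-box result: the base-$2$ logarithm form of the degeneracy bound is the \emph{bipartite} version (in a non-bipartite graph a cop's probe of a neighbor of the robber's vertex can be equal to, one more than, or one less than its distance to that vertex, giving three classes per cop rather than two), which is precisely why the paper's one-line proof mentions bipartiteness --- your graph is bipartite, so the argument stands, but you should invoke that hypothesis rather than asserting $\zeta(G) \ge \log_2(\mathrm{degen}(G))$ for every connected graph.
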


\begin{proof}
The incidence graph of a \rm{BIBD}$(v,b,r,k,\lambda)$ is bipartite and has degeneracy $k$. The result follows by Theorem~2.3 of \cite{BK}. \end{proof}

Observe that if $G=(X\cup \mathcal{B},E)$ is the incidence graph of a $\text{BIBD}(v,b,r,k,1)$, then the following properties hold.
\begin{enumerate}
\item The graph $G$ is $(r,k)$-\emph{biregular}: all vertices have degree $r$ or $k$.
\item For all $u_1,u_2 \in X$, we have that $|N(u_1) \cap N(u_2)|=1,$ and for all $v_1,v_2 \in \mathcal{B},$ we have that $|N(v_1) \cap N(v_2)|\leq 1$.
\item The graph $G$ has girth 6; that is, its smallest cycle is of order 6.
\item If the BIBD is symmetric, then for all $v_1,v_2 \in \mathcal{B}$, we have that $|N(v_1) \cap N(v_2)|=1$.
\end{enumerate}

We may also restrict our attention to $\rm{BIBD}$s with $\lambda = 1$.
In this case, Theorem~\ref{thm:2designs} yields the following upper bound.

\begin{corollary}\label{corrr}
If $G$ is the incidence graph of a \rm{BIBD}$(v,b,r,k,1),$ then we have that
\[ \zeta(G) \leq 2r+k-3.
\]
\end{corollary}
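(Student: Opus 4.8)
The plan is to deduce this directly from Theorem~\ref{thm:2designs}, since a $\text{BIBD}(v,b,r,k,1)$ is in particular a $2$-balanced design with index $\lambda=1$. All that remains is to exhibit vertices $u,u',v$ satisfying the hypotheses of that theorem and to read off the degrees. First I would pick any two distinct points $u,u'\in X$. Because each pair of points occurs in exactly $\lambda=1$ block, there is a unique block $v\in\mathcal{B}$ containing both $u$ and $u'$; in the incidence graph this is exactly the condition $N(u)\cap N(u')=\{v\}$. Thus the common-neighborhood requirement is automatic from the defining property of a $\text{BIBD}$ with $\lambda=1$ (equivalently, property (2) listed before the corollary).

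Next I would substitute the degrees. In the incidence graph of a $\text{BIBD}$ each point has degree $r$ and each block has degree $k$, so $|N(u)|=|N(u')|=r$ and $|N(v)|=k$. The remaining hypotheses $|N(u)|\geq 2$ and $|N(u')|\geq 2$ amount to $r\geq 2$, which holds for any nondegenerate $\text{BIBD}$: if $r=1$ then each point lies in a single block, and $\lambda=1$ would force all points into one common block, giving $k=v$, a degenerate design excluded from consideration. With the degrees in hand, the bound of Theorem~\ref{thm:2designs} becomes $\zeta(G)\leq |N(u)|+|N(u')|+|N(v)|-3 = r+r+k-3 = 2r+k-3$, which is the claimed inequality.

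There is no genuine obstacle here; the only step requiring a word of justification is confirming the degree hypothesis $r\geq 2$, i.e., that the design is nondegenerate, after which the result is an immediate specialization of the already-established theorem. In writing this up I would simply verify the hypotheses of Theorem~\ref{thm:2designs} for an arbitrary pair of points and their unique common block, and then substitute the biregular degrees $r$ and $k$.
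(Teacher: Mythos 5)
Your proposal is correct and is exactly the argument the paper intends: the corollary is stated immediately after Theorem~\ref{thm:2designs} with no separate proof, precisely because one specializes that theorem to two points $u,u'$ and their unique common block $v$ (which exists since $\lambda=1$), substituting $|N(u)|=|N(u')|=r$ and $|N(v)|=k$. Your extra remark that $r\geq 2$ follows from nondegeneracy (equivalently, from $k<v$ and $r=(v-1)/(k-1)$) is a fine, if routine, verification of the remaining hypothesis.
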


We also have an improved lower bound on the localization number of $\rm{BIBD}$s with $\lambda=1$.

\begin{theorem} \label{thm:genLowerBIBD}
Let $G$ be the incidence graph of a \rm{BIBD}$(v,b,r,k,1)$ with $k<r$ and let $\alpha = \min\{k,r-k\}$ and let $d$ be some positive integer with
$d \leq \frac{r-2}{\alpha}$ such that $d < \max \left\{k, \frac{2r(k-1)-2}{k+1+2(k-1)\alpha}\right\}$.
We then have that $\zeta(G) > d.$
\end{theorem}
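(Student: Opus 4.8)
The plan is to prove this lower bound by exhibiting a robber strategy and applying the \emph{maintaining the robber territory} technique: I will show the robber can always keep his territory (his distance-vector class) of size at least two, so that no unique candidate ever arises and $d$ cops cannot win. First I record a parity observation: for any single cop $c$, a point and a block lie at distances of opposite parity from $c$, so every distance-vector class is contained entirely in $X$ or entirely in $\mathcal{B}$. Consequently the robber's territory is at each round either a set of points or a set of blocks, and since the neighbourhood of a point-set lies in $\mathcal{B}$ and vice versa, the robber cannot refresh a shrinking territory without alternating sides. I will therefore have the robber alternate between a block-territory that is a \emph{pencil} (a set of blocks through a common point $p$) and a point-territory, re-growing the territory on each switch, and bound the shrinkage over one full two-round cycle.

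The block-phase is where the hypothesis $d \le (r-2)/\alpha$ enters. Suppose the robber sits on points and has chosen a point $p$ in his current point-territory; he moves to the pencil of blocks through $p$, of which there are $r$ by property (1) and which pairwise meet only in $p$ by property (2), as $\lambda = 1$. I will show each cop splits this pencil into two classes whose smaller side has size at most $\alpha = \min\{k, r-k\}$: a block-cop off $p$ is at distance $2$ from exactly the $k$ pencil blocks meeting it and distance $4$ from the remaining $r-k$ (each point of such a cop determines a distinct pencil block, again using $\lambda = 1$), giving sides of sizes $k$ and $r-k$; whereas a point-cop, or a cop on a pencil block or on $p$, isolates only a single block. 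Taking the class lying on the majority side of every cop and bounding the complement by a union bound, the surviving pencil has size at least $r - d\alpha$, which is at least $2$ precisely when $d \le (r-2)/\alpha$. This class is reachable from $p$ and, by the parity observation, constitutes a legitimate block-territory of size at least two.

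The point-phase produces the two branches of the maximum. From the surviving pencil $\mathcal{C}$ through $p$, of size $m \ge r - d\alpha$, the robber can reach the $1 + m(k-1)$ points these blocks cover (disjoint off $p$, by $\lambda = 1$). In the first method he restricts to the $k$ points of a single pencil block; since any other cop-block meets it in at most one point and each point-cop isolates one point, at most $d$ of these $k$ points are distinguished, so a class of size at least $k - d$ survives, giving the branch $d < k$. The second method amortizes the block-phase and point-phase losses over a full point-to-pencil-to-point cycle. Tracking a single potential, namely the reachable candidate pool of size about $2r(k-1) = 2(v-1)$ over the two rounds (using $\lambda(v-1) = r(k-1)$), I will charge to each cop at most $2(k-1)\alpha$ for the pencil-phase (each of the $\le \alpha$ pencil blocks it removes costing $k-1$ coverage points, counted across both rounds) and at most $k+1$ for a point-phase probe. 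Requiring at least two survivors gives $2r(k-1) - d\,(k+1+2(k-1)\alpha) \ge 2$, i.e. $d < \frac{2r(k-1)-2}{k+1+2(k-1)\alpha}$. The maximum in the statement records that the robber may use whichever method admits the larger value of $d$.

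The main obstacle is precisely this amortized two-round count. Analysing the two phases independently charges a point-phase block-probe its full cost of $k$ marked points and yields only the weaker denominator $2k + 2(k-1)\alpha$; obtaining the stated $k+1+2(k-1)\alpha$ requires counting the point-phase and pencil-phase against one shared pool of size $2(v-1)$, so that the point-phase cost is effectively halved. The delicate supporting points are the degenerate cop placements that fail to split evenly, namely cops on the centre $p$, on a pencil block, or blocks lying near the boundary between the $k$-side and the $(r-k)$-side, together with the bookkeeping guaranteeing that the class the robber selects is simultaneously reachable from his previous territory, monochromatic of the correct side, and of the size the count promises. Initialisation is immediate, since $T_0 = V(G)$ leaves the whole vertex set available for the robber's first choice; once the two-round maintenance is in place the territory never drops below two vertices, and $\zeta(G) > d$ follows.
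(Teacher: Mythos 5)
Your skeleton is the same as the paper's: maintain the robber territory, alternate a pencil phase with a point phase, use the union bound over the $d$ cops to keep $r-d\alpha\ge 2$ blocks of the pencil through $u$ alive (this is where $d\le \frac{r-2}{\alpha}$ enters), and split the point phase into two branches matching the maximum. The pencil phase is correct as you state it. Both branch arguments, however, have genuine gaps.

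In the first branch, restricting to the $k$ points of a single surviving pencil block and observing that each cop isolates at most one of them keeps the robber alive only when $k-d\ge 2$, i.e.\ $d\le k-2$: if $d=k-1$, the lone unisolated point is a unique candidate and the robber is caught. So as written you prove $d<k-1$, not $d<k$. The paper closes this off-by-one by using that the surviving territory contains at least \emph{two} pencil blocks $v,v'$, so the candidate pool is $(N(v)\cup N(v'))\setminus\{u\}$; a cop adjacent to one point of each block sees both at distance $1$ and cannot separate them, and this is exactly what defeats $k-1$ cops.

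In the second branch, your charging numbers (cost $k+1$ per cop against a doubled pool, plus $2(k-1)\alpha$ per cop for the pencil shrinkage) reproduce the correct inequality, but the mechanism you propose to justify them --- amortizing costs ``over two rounds'' against a shared pool of size $2(v-1)$ --- is not how the factor $2$ arises, and you yourself flag this step as unresolved. In the paper the entire count lives inside a single point phase: among the $(k-1)(r-d\alpha)$ candidates, at most one can be at distance at least $2$ from every cop, and at most $d$ can be at distance less than $2$ from exactly one cop (two such candidates sharing their unique close cop would have identical distance vectors, by pigeonhole); every remaining candidate therefore needs at least two close cops, while each cop is close to at most $k-1$ candidates. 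Double counting gives $2\left[(k-1)(r-d\alpha)-1-d\right]\le d(k-1)$, which rearranges to $d\ge \frac{2r(k-1)-2}{k+1+2(k-1)\alpha}$. The doubling is a within-round double-coverage requirement, not a two-round amortization; without this pigeonhole step your second branch remains an assertion rather than a proof.
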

\begin{proof}
Suppose for contradiction that $d$ cops were sufficient to capture the robber. The technique we use is to maintain the robber territory.
We will show that if there are at least two vertices of $X$ in the robber territory during some turn, then the robber can move such that the robber territory on the next round contains at least $r -d\alpha \geq 2$ vertices of $\mathcal{B}$. In the subsequent round, the robber can move such that the robber territory on the next round contains at least $2$ vertices of $X$.
This will complete the proof by induction, noting that the initial case follows immediately, as the robber can play his first move by assuming he was already on a vertex of $X$ and moving to a vertex of $\mathcal{B}$.

\emph{From $X$ to $\mathcal{B}$:}
Suppose that the robber territory $T_i$ contains only $u\in X$ (but the robber was not caught in the previous round). We analyze the distance from the placement of any cop to the vertices in $T'_{i+1} = N(u)$. If a cop is in $X \setminus \{u\}$, then she has distance $1$ to exactly one vertex in $N(u)$ and distance $3$ to all other vertices in $N(u)$. If a cop is on $u$, then she has distance $1$ to all vertices in $N(u)$. If a cop is in $\mathcal{B} \setminus N(u)$, then she has distance $2$ to $k$ vertices in $N(u)$ and distance $4$ to all other vertices in $N(u)$. If a cop is in $N(u)$, then she has distance $0$ to the unique vertex in $N(u)$ that she is on, and distance $2$ to all other vertices in $N(u)$.

For the vertices in $(X \setminus \{u\}) \cup N(u)$, a cop at this location has distance $d\in \{0,1\}$ to one vertex of $N(u)$ and distance $d+2$ to the other $r-1$ vertices of $N(u)$.
Hence, a cop at this location can distinguish only one vertex of $N(u)$ as being separated from the others. On $u$, a cop has distance $1$ to all vertices in $N(u)$ and so cannot distinguish the vertices of $N(u)$ at all.
However by playing on  $\mathcal{B} \setminus N(u)$, a cop can distinguish $k$ vertices of $N(u)$ (which have distance $2$ to the robber) from $r-k$ vertices of $N(u)$ (which have distance $4$ to the robber).
If we played all $d$ cops on vertices of $\mathcal{B} \setminus N(u)$, then there would be  $r -d\alpha$ vertices of $N(u)$ that have the same distance to all cops (recall that $\alpha = \min\{k,r-k\}$).
If the cops played on other vertices, there would be even more vertices of $N(u)$ that have the same distance to all cops.
As $r -d\alpha \geq 2$ by assumption, the robber can move such that there are at least two vertices in the new robber territory $T_{i+1}$.

\emph{From $\mathcal{B}$ to $X$:}
Suppose that the robber territory $T_{i+1}$ contains at least $r -d\alpha $ vertices in $N(u)$ with $u \in X$.
We analyze the distance from the placement of any cop to the vertices in $T'_{i+2} = N(T_{i+1}) \setminus \{u\}$, assuming that the robber will move to one of these vertices.
We assume that the robber does not move to $u$, as this move makes it straightforward to identify his location, and so disallowing this robber move can only strengthen the cops' strategy. We can make this assumption as it only weakens the robber's  strategy, and we will show that the robber can win regardless.
We examine each case for the cops.

If a cop is in $T_{i+1}$, then she has distance $3$ to all vertices in $T'_{i+2}$.
If a cop is in $N(u) \setminus T_{i+1}$, then she has distance $1$ to at most $k-1$ vertices in $T'_{i+2}$ and in particular at most one in $N(v)\setminus \{u\}$ for each $v \in T_{i+1}$, and distance $3$ to all other vertices in $T'_{i+2}$. If a cop is in $\mathcal{B} \setminus N(u)$, then she has distance $1$ to at most $k$ vertices in $T'_{i+2}$, but at most one in $N(v)\setminus \{u\}$ for each $v \in T_{i+1}$, and distance $3$ to all other vertices in $T'_{i+2}$. If a cop is on $u$, then she has distance $2$ to all vertices in $T'_{i+2}$. If a cop is in $T'_{i+2}$, then she has distance $0$ to the unique vertex in $T'_{i+2}$ that she is on, and distance $2$ to all other vertices in $T'_{i+2}$. If a cop is in $X \setminus T'_{i+2}$, then she has distance $2$ to all vertices in $T'_{i+2}$.
From here, we provide two observations that show that the robber can escape when $d$ is below some given function on the parameters of the $\rm{BIBD}$, where each observation provides a different such function.

As a first observation, we now show that if $d < k$, then the robber can escape.
Suppose that the cops knew that the robber resides on $N(v) \setminus \{u\}$ for some $v \in T_{i+1}$.
Each cop can only uniquely distinguish at most one vertex of the $k$ vertices in $N(v)$, and so at least $k-1$ cops are needed to distinguish the exact location of the robber, and these cops must reside on $T'_{i+2} \cup (\mathcal{B} \setminus N(u))$.
If instead the cops knew that the robber resides on $(N(v) \cup N(v') )\setminus \{u\}$ for some $v,v' \in T_{i+1}$ and $v \neq v'$, then it still holds by the same argument that at least $k-1$ cops must reside on $T'_{i+2} \cup (\mathcal{B} \setminus N(u))$. However, then there are two vertices (one in $N(v)\setminus \{u\}$  and one in $N(v')\setminus \{u\}$) that are indistinguishable by the cops.
Hence, $k-1$ cops are insufficient to capture the robber.

As a second observation, we now show that if $d < \frac{2r(k-1)-2}{k+1+2(k-1)\alpha}$, then the robber can escape.
Note that $|T'_{i+2}| = (k-1)(r-d\alpha)$.
If there are two vertices in $T'_{i+2}$ that have distance $2$ or $3$ to all cops, then the distance vectors for these two vertices would be the same, and the cops cannot distinguish them.
Thus, as we are assuming the robber is captured with $d$ cops, at most one vertex in  $T'_{i+2}$ that has distance $2$ or $3$ to all cops.
If there are $d+1$ vertices in $T'_{i+2}$ that have distance $2$ or $3$ to all cops except one, then by the pigeonhole principle there are two vertices in  $T'_{i+2}$ that have distance less than $2$ to the same cop and distance $2$ or $3$ to all other cops. These distance vectors for these two vertices would be the same, and the cops cannot distinguish them.
Thus, assuming the robber can be captured, at most $d$ vertices in  $T'_{i+2}$  have distance $2$ or $3$ to all cops except one. The cops cannot hope to be more efficient than this, so we assume that each cop is distance less than two to its own unique vertex of $T'_{i+2}$.
We then have that there are a set of $(k-1)(r-d\alpha) -1-d$ vertices of $T'_{i+2}$ that must have distance less than $2$ to at least two cops.
However, each cop can have distance less than $2$ to at most $k-1$ vertices in this set. Let $\mathcal{B}'$ be the blocks of $\mathcal{B}$ that contain a set of $d$ cops, and let $X'$ be the points in $X$ that have two neighbors in $\mathcal{B}'$. We then have $|X'| = (k-1)(r-d\alpha) -1-d$ and $|\mathcal{B}'|=d.$ The vertices of $\mathcal{B}'$ have degree $k-1,$ and the vertices of $X'$ have degree at least $2.$ Thus, we have that $2[(k-1)(r-d\alpha) -1-d] \leq d(k-1)$. It then follows that $d\geq \frac{2r(k-1)-2}{k+1+2(k-1)\alpha}$ is a requirement for the cops to be able to capture the robber in this case, contradicting our assumptions about $d$.  Hence, if $d < \max \left\{ k, \frac{2r(k-1)-2}{k+1+2(k-1)\alpha}\right\}$, then the robber can move from $\mathcal{B}$ to $X$ and avoid capture during the next cop move.
\end{proof}

\section{Projective and affine planes}\label{ss: projective and affine planes}

We now consider well-known finite geometries including projective and affine planes. Incidence graphs of projective planes of order $q\ge 23$ were shown to have metric dimension $4q-4$ in \cite{ht}, and so this provides upper bounds for their localization numbers. For recent work on the metric dimension of incidence graphs of symmetric designs, see \cite{bailey}. In Theorem~\ref{thm:pp}, we prove that projective planes of order $q$ have localization number $q+1.$

We first establish a lower bound on the localization number of symmetric BIBDs. This proof runs similar to Theorem \ref{thm:genLowerBIBD}, with two differences. First, by considering two vertices in the robber territory when moving from $X$ to $\mathcal{B}$, we can increase the lower bound by one. Second, due to the symmetric property, the direction going from $X$ to $\mathcal{B}$ and going from $\mathcal{B}$ to $X$ are identical.

\begin{theorem} \label{thm:ppLowerBIBD}
If $G$ is an incidence graph of a symmetric \rm{BIBD}$(v,b,r,k,1)$ (so $v=b$ and $r=k$), then $\zeta(G) > k-1.$
\end{theorem}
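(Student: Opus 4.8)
The plan is to adapt the "maintaining the robber territory" technique from Theorem~\ref{thm:genLowerBIBD} to the symmetric setting, exploiting the fact that $r=k$ and that any two blocks meet in exactly one point (property (4) of the list preceding Corollary~\ref{corrr}). Suppose for contradiction that $d=k-1$ cops suffice. I will show that if the robber territory contains at least two vertices on one side of the bipartition, the robber can move so that, two rounds later, it again contains at least two vertices. Because the design is symmetric, the graph is essentially self-dual with respect to the two sides $X$ and $\mathcal{B}$, so it suffices to analyze a single transition (say from $X$ to $\mathcal{B}$) and then invoke symmetry for the reverse transition, rather than handling the two directions separately as in Theorem~\ref{thm:genLowerBIBD}.

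The heart of the argument is the $X$-to-$\mathcal{B}$ transition. Assume the robber territory $T_i$ consists of two points $u_1, u_2 \in X$; their neighborhoods $N(u_1), N(u_2)$ each have size $k$ and, since $\lambda=1$, meet in exactly the single block through $u_1$ and $u_2$. Thus $T'_{i+1} = (N(u_1)\cup N(u_2))\setminus\{\text{resolved vertices}\}$ has roughly $2k-1$ blocks. I would classify, exactly as in the proof of Theorem~\ref{thm:genLowerBIBD}, what distance information each cop position yields about the blocks in $N(u_1)\cup N(u_2)$: a cop on a point distinguishes at most one block via a distance-$1$ probe (the block through that point and the relevant $u_j$), while a cop on a block distinguishes only the one block it occupies; crucially, in the symmetric case $r=k$ means a cop placed off $N(u_j)$ cannot split $N(u_j)$ into a distance-$2$/distance-$4$ dichotomy with the same leverage as before, because $r-k=0$. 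The counting then shows that $d=k-1$ cops cannot resolve all of $T'_{i+1}$ down to a single candidate: at least two blocks share a distance vector, so the robber can move to a territory of size at least two on the $\mathcal{B}$ side.

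I would then observe that the reverse transition, from two blocks in $\mathcal{B}$ back to two points in $X$, is handled verbatim by the same computation with the roles of $X$ and $\mathcal{B}$ interchanged, which is legitimate precisely because symmetry gives $v=b$, $r=k$, and $|N(v_1)\cap N(v_2)|=1$ for blocks as well as for points. Combining the two transitions gives an inductive strategy keeping the robber territory of size at least two forever, and the initial step is immediate since the robber may declare a first virtual move from a point to a block. This yields $\zeta(G) > k-1$, as claimed.

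The main obstacle I anticipate is the precise counting in the $X$-to-$\mathcal{B}$ transition: I must verify that two points (rather than one, as one might first try) are genuinely needed to force an unresolvable pair, and that the "gain of one" promised in the remark before the statement really comes from considering two vertices of $X$ simultaneously. Concretely, I expect the sharp inequality to hinge on the fact that each of the $k-1$ cops can distinguish at most one block in $T'_{i+1}$, so $k-1$ cops resolve at most $k-1$ of the $\approx 2k-1$ candidate blocks, leaving at least two indistinguishable; making this bookkeeping airtight — correctly accounting for the shared block $N(u_1)\cap N(u_2)$ and for cops that sit inside $T'_{i+1}$ — is where care will be required.
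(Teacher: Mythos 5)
Your overall framework matches the paper's: assume $k-1$ cops suffice, maintain a robber territory of size at least two, analyze the transition from two points $u_1,u_2\in X$ to $\mathcal{B}$, and invoke the self-duality of the symmetric design for the reverse transition. However, the counting at the heart of your argument has a genuine gap. The claim that each cop distinguishes at most one block of $T'_{i+1}=N(u_1)\cup N(u_2)$, so that $k-1$ cops resolve at most $k-1$ of the roughly $2k-1$ candidate blocks, is false, because distance vectors act jointly. For a concrete counterexample, let $B_0$ be the block through $u_1$ and $u_2$, place one cop on the point $u_1$ and $k-2$ cops on points $w_1,\dots,w_{k-2}$ chosen generically off $B_0$. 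The cop on $u_1$ separates $N(u_1)$ (distance $1$) from $N(u_2)\setminus\{B_0\}$ (distance $3$), while the cop on $w_i$ is at distance $1$ from exactly two blocks of the union: the block through $w_i,u_1$ and the block through $w_i,u_2$. Jointly, these $k-1$ cops give pairwise distinct distance vectors to $2(k-2)$ blocks, which exceeds $k-1$ once $k\ge 4$. So ``at most one resolved block per cop'' cannot be the engine of the proof. (The cops do still fail against this placement, but for a different reason: $B_0$ and the one remaining block of $N(u_1)$ both receive the vector $(1,3,\dots,3)$.)

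The idea you are missing --- and the crux of the paper's proof --- is to exploit the fact that the robber may simply stay put, so some cop must distinguish the two \emph{points} $u_1$ and $u_2$ from each other. In a symmetric design with $\lambda=1$, the only positions that do this are $u_1$, $u_2$, and blocks through exactly one of them; and every such position has \emph{constant} distance to all of $N(u_1)$ or to all of $N(u_2)$: a cop on $u_1$ is at distance $1$ from every block of $N(u_1)$, and a cop on a block through $u_1$ only is at distance $2$ from every block of $N(u_2)$, since in a symmetric design any two distinct blocks meet and hence are at distance $2$. Thus one cop is necessarily ``blind'' on a whole neighborhood, say $N(u_1)$, and the counting is then carried out inside that single neighborhood: each of the remaining $k-2$ cops singles out at most one of the $k$ blocks of $N(u_1)$ (here your correct observation that $r-k=0$ kills the distance-$2$/distance-$4$ dichotomy of Theorem~\ref{thm:genLowerBIBD} is exactly what is needed), leaving at least two blocks of $N(u_1)$ with identical distance vectors. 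Without the blindness observation, per-neighborhood counting with all $k-1$ cops leaves only one unresolved block, which is not enough, and counting over the union, as you propose, is unsound as shown above. This is precisely how the ``gain of one'' from considering two vertices simultaneously, promised in the remark before the theorem, actually arises.
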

\begin{proof}
To derive a contradiction, we assume using $k-1$ cops is sufficient to capture the robber, and we play with $k-1$ cops.
Suppose at some round during the game play that the robber territory $T_i$ has at least two vertices $u_1,u_2$ in $X$. For the initial case, note that $T_0$ contains all vertices, and so contains at least two vertices in $X$.
The robber may stay on $u_1,u_2$, and so some cop must be used to distinguish $u_1$ and $u_2$. However this is done, the cop that is used to distinguish these two vertices will have constant distance to all vertices in either $N(u_1)$ or $N(u_2)$.
Without loss of generality, we assume this cop has constant distance to the vertices in $N(u_1)$.
Each position that the cops can take can distinguish at most one vertex in $N(u_1)$, and so the remaining $k-2$ cops can distinguish at most $k-2$ vertices in $N(u_1)$, and so there are two remaining vertices in $N(u_1)$ that are not able to be distinguished, and so the robber territory $T_{i+1}$ contains at least two vertices of $\mathcal{B}$. By induction, the proof is complete.
\end{proof}

The lower bound just provided is in fact tight, as will be shown in the next theorem.
The proof used in the next theorem uses a two-step induction.
Let $u$ be a fixed vertex that could be in $X$ or $\mathcal{B}$.
To explain this proof technique, let $P(\alpha)$ be the condition that cops know that the robber is on a set of $k-1-\alpha$ vertices of $N(v) \setminus \{u\}$ for some $u \in X$, $v \in N(u)$, and $k-1 - \alpha \geq 3$, and similarly let $Q(\alpha)$ be the condition that the cops identify that the robber is on a set of $r-1-\alpha$ vertices of $N(v) \setminus \{u\}$ for some $u \in \mathcal{B}$, $v \in N(u)$, and $r-1 - \alpha \geq 3$.
We initialize by showing that $P(0)$ is true.
We then show that if $P(\alpha)$ is true, then $Q(\alpha+2)$ is true, which we call Step 1.
After this, we show that if $Q(\alpha)$ is true, then $P(\alpha+2)$ is true, which we call Step 2.
We then proceed in an inductive-like fashion, applying Step 1 followed by Step 2, until we have that $P(k-2)$ is true.
If $P(k-2)$ is true, then there is only one vertex that the robber could reside on, and so the cops have identified the location of the robber.
A modification of this two-step induction technique will also be used in the proofs of Theorems~\ref{thm:nearSymUpperBound} and \ref{affinet}.

\begin{theorem}\label{thm:pp}
If $G$ is the incidence graph of a symmetric \rm{BIBD}$(v,b,r,k,1)$ and $k \geq 3$, then we have that $\zeta(G) =  k$.
\end{theorem}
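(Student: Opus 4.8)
The plan is to pair the matching lower bound, which is already in hand, with a $k$-cop winning strategy built on the two-step induction set up immediately before the statement. The lower bound $\zeta(G)\ge k$ is immediate from Theorem~\ref{thm:ppLowerBIBD}, which gives $\zeta(G)>k-1$ for every symmetric design with $\lambda=1$. So the whole content is the upper bound $\zeta(G)\le k$. Throughout I will lean on the self-dual structure of a symmetric $\mathrm{BIBD}(v,v,k,k,1)$: any two points lie on a unique common block, any two blocks meet in a unique common point, the incidence graph is $k$-regular, two points are always at distance $2$, two blocks are always at distance $2$, and a point and a block are at distance $1$ or $3$ according to incidence. In particular a point-cop has distance $2$ to every other point (so point-cops cannot separate points) and distance $1$ or $3$ to a block according to incidence, and dually for block-cops. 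Hence to split a set of blocks the cops must stand on points, and to split a set of points they must stand on blocks.

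The first step is the base case $P(0)$: the robber is confined to the $k-1$ points of some block other than a reference point $u$. To reach it I place the $k$ cops on the $k$ blocks through a point $p$, cycling $p$ over the points of a fixed block $B$. If the robber ever sits on a point $z\neq p$, then exactly one cop-block contains $z$ and the distance vector confines him to the $k-1$ points of the block $pz$, which is exactly $P(0)$ (with $u=p$, $v=pz$). Otherwise the robber sits on a block $w$; since he cannot move from one block to another without stepping onto a point (where he would be confined to $P(0)$), he must remain on $w$, and he is captured at the round where the cycling center equals $w\cap B$, because then a cop stands on $w$. Thus within $O(k)$ rounds the cops either win or establish $P(0)$; by the point--block duality it is equally good to start the induction from the dual base case $Q(0)$.

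The heart of the argument is Step 1, $P(\alpha)\Rightarrow Q(\alpha+2)$, with Step 2 being its exact dual. Assume the robber is confined to a set $R$ of $m=k-1-\alpha\ge 3$ points, all on a block $v$, with $R\subseteq N(v)\setminus\{u\}$. When the robber crosses the bipartition he moves to a block, and the reachable blocks are exactly $v$ together with the pencils $\mathcal{P}_x$ of the $k-1$ blocks through $x$ other than $v$, for $x\in R$; these pencils are pairwise disjoint since two points of $R$ already share $v$. Because only point-cops separate blocks, and a point-cop is incident to at most one block of each pencil $\mathcal{P}_x$, I place the $k$ point-cops so that (i) every reachable block lying outside a single pencil through a common point $p^{\ast}$ contains a cop, and is therefore resolved, and (ii) at least two of the (at most $m$) reachable blocks through $p^{\ast}$ are also separated off. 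Every surviving non-singleton class is then a set of blocks sharing a single point of the configuration, hence concurrent, and of size at most $m-2$; this is precisely $Q(\alpha+2)$. Alternating Step 1 and Step 2 decreases the confined set by $2$ each time, and the hypothesis $m\ge 3$ guarantees the last step drops a size-$3$ set to a singleton rather than overshooting, so the induction terminates at $P(k-2)$ (or $Q(k-2)$) of size $1$, capturing the robber.

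The main obstacle is exactly the cop placement in the inductive step. Placing all $k$ cops on the points of one block merely reproduces a concurrent pencil of the same size $m$ and gains nothing, matching the $k-1$ lower bound; the reduction by $2$ genuinely requires spending the extra $k$th cop together with the self-duality. I expect the delicate part to be exhibiting a placement that simultaneously covers every reachable block off the chosen pencil through $p^{\ast}$ and still trims two members of that pencil, while certifying that no large cop-free class of \emph{non}-concurrent blocks survives; the relevant balancing (there are $1+m(k-1)$ reachable blocks, and each point-cop meets one block per pencil) is what has to be made to work. By comparison the base-case shrinking argument, the choice of $p^{\ast}$, and the bookkeeping that maintains the reference point $u$ and the already-resolved points of $v$ across rounds are routine.
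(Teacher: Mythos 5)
Your lower bound (via Theorem~\ref{thm:ppLowerBIBD}), your base case, and your overall two-step skeleton all match the paper's proof of Theorem~\ref{thm:pp}; indeed your base case (cycling the centre $p$ of a pencil of block-cops along a fixed block $B$) is a perfectly good variant of the paper's opening, which uses $k-1$ cops on $N(u)$ plus one scanning cop. But the inductive step, which you yourself identify as the heart of the argument, has a genuine gap, and the placement you outline cannot be made to work. First, the inference ``contains a cop, and is therefore resolved'' is false: a point-cop at $c$ probes distance $1$ to \emph{all} $k$ blocks through $c$, so two candidate blocks whose only incident cop is the same point $c$ are indistinguishable; a block is resolved only if its \emph{set} of incident cops differs from that of every other candidate. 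Second, conditions (i) and (ii) are jointly infeasible with $k$ cops: to keep candidates in distinct pencils $\mathcal{P}_x$ from being confused you need about $m-1$ cops on the points of $R$; to resolve the $k-2$ blocks of each $\mathcal{P}_x$ avoiding $p^{\ast}$ you need a further $k-3$ or $k-2$ cops (one cop off the configuration meets exactly one block of each pencil, so these can serve all pencils at once, but no fewer suffice); and (ii) costs two more cops on blocks through $p^{\ast}$, since a cop at $p^{\ast}$ itself separates nothing in its own pencil. Any way one counts, this exceeds $k$ whenever $m\geq 2$, and at the breakeven count the $p^{\ast}$-pencil survives with all $m$ of its blocks, so the territory does not shrink --- exactly the stall you noted for the naive all-cops-on-one-block placement.

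The missing idea, which is how the paper's proof succeeds, is that the surviving pencil must \emph{not} be chosen in advance: it is read off adaptively from the probe. The paper places $m-1=k-2-\alpha$ cops on the points of $A=R$ (all but one) and the remaining $\alpha+2$ cops on the points of a \emph{second} block $v'$ through $u$ (omitting $u$). The $A$-cops then tell the cops which pencil the robber entered: either a unique cop on $A$ probes $1$, or all probe $3$ and the robber left from the uncovered point. Consequently blocks lying in different pencils automatically receive different distance vectors and never need to be separated from one another --- this is what collapses your covering requirement. Within whichever pencil $\mathcal{P}_{u'}$ the robber entered, each cop on $v'\setminus\{u\}$ lies on exactly one block through $u'$, and distinct such cops lie on distinct blocks (two of them on a common block through $u'$ would force that block to equal $v'$, which misses $u'$); so the same $\alpha+2$ cops excise $\alpha+2$ blocks from \emph{every} pencil simultaneously, leaving a concurrent class of size $m-2$, i.e.\ $Q(\alpha+2)$, with exactly $k$ cops. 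Dualizing gives Step 2. Note also that the paper must handle a parity issue at termination (when the confined set would fall below size $3$, the cops artificially enlarge it, decreasing $\alpha$ by one, so that the final step lands exactly on a singleton); your remark that ``$m\geq 3$ guarantees the last step drops a size-$3$ set to a singleton'' glosses over this, but that is a minor repair compared to the inductive placement itself.
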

\begin{proof}
By Theorem~\ref{thm:ppLowerBIBD}, it remains to be shown that $\zeta(G)\leq  k$.
We give a cop strategy that shows that $k$ cops are sufficient to capture the robber.
We use the technique of decreasing the robber territory.

Let $u \in X$.
For their first turn, the $r=k$ cops can be positioned such that the robber is known by the cops to be on a set of $k-1$ vertices $N(v)\setminus \{u\} \subseteq X$ for some $v \in N(u)$ or on a single vertex.
(Note that although $r=k$, it will be useful in later proofs to use $r$ when referencing the neighborhood of a vertex in $X$, and $k$ when referencing the neighborhood of a vertex in $\mathcal{B}$.)
To do this, we place $r-1$ cops on $r-1$ of the $r$ vertices of $N(u)$ (that is, leaving exactly one vertex without a cop), and place the last cop on $\mathcal{B} \setminus N(u)$.
While the robber remains on $\mathcal{B}$, the cops implement the scanning technique to either (i) force the robber onto $X$, or
(ii) after all vertices in $\mathcal{B}$ have been visited and the robber is not forced onto $X$, capture the robber on the vertex without a cop of $N(u)$.

The robber then must at some point move to $X$.
The cops then take their next turn, following the above strategy (they do not know the robber has moved to $X$ until after they have probed).
If all cops (there must be at least two) on $N(u)$ probe a distance of $1$ to the robber, then the cops identify the robber as residing on $u$.
If only one cop $C_i$ on $N(u)$ probes a distance of $1$ to the robber, then the cops identify the robber as residing on $N(v)\setminus \{u\}$, where $v =C_i$.
If all cops on $N(u)$ probe a distance of $3$ to the robber, then the cops identify the robber is residing on $N(v)\setminus \{u\}$ where $v \in N(u)$ is the unique vertex in $N(u)$ without a cop.
Therefore, we have that the robber is identified by the cops to be on a set of $k-1$ vertices $N(v)\setminus \{u\} \subseteq X$ for some $v \in N(u)$ or on a single vertex.
We now proceed by induction.

\emph{Step 1: From $X$ to $\mathcal{B}$.}
Suppose that the cops identify that the robber is on a set of $k-1-\alpha$ vertices $A \subseteq N(v) \setminus \{u\}$ for some $u \in X$, $v \in N(u)$, and $k-1 - \alpha \geq 3$. Here, $\alpha$ is an integer such that $\alpha \geq -1$. On the first application of Step 1, we have $\alpha = 0$. The robber takes his turn.
The cops place $k-2 - \alpha$ cops on the $k-1 -\alpha$ vertices of $A$ without repetition, and $\alpha +2$ cops on vertices of $N(v') \setminus \{u\}$, where $v' \in N(u) \setminus \{v\}$.
If the robber remains on $A$, then he is caught, as if a cop on $A$ does not probe $0$, then they all probe $2$, and the robber is discovered to be on the unique vertex of $A$ without a cop.
If the $k-2-\alpha \geq 2$ cops on $A$ all probe a distance of $1$, then the robber is on $v$.
Otherwise, suppose the robber moved from $u' \in X$ to $N(u')$. If only the cop $C_i$ on $A$ probes a distance of $1$, then the cops discover $u' = C_i$.
Otherwise, all cops on $A$ probe a distance of $3$, and $u'$ is the unique vertex of $A$ without a cop.
If we know the robber is in $N(u')\setminus \{v\}$ and a cop $C_j$ on $N(v') \setminus \{u\}$ probes a distance of $1$, then the robber is uniquely identified.
To have avoided capture (since the robber knows the cops move in advance), the robber must have moved to one of the $r-1-(\alpha+2)$ vertices of $N(u')\setminus \{v\}$ that is not identifiable with the help of the cops in $N(v') \setminus \{u\}$.
The cops, on their turn, identify that the robber is on a set of $r-1 - (\alpha+2)$ vertices $A' \subseteq N(u') \setminus \{v\}$ for some $v \in \mathcal{B}$, $u' \in N(v)$.

\emph{Step 2: From $\mathcal{B}$ to $X$.}
By an analogous argument, suppose that the cops identify that the robber is on a set of $r-1-\alpha$ vertices $A \subseteq N(v) \setminus \{u\}$ for some $u \in \mathcal{B}$, $v \in N(u)$, and $r-1 - \alpha \geq 3$.
The cops, on their turn, identify that the robber is on a set of $k-1 - (\alpha+2)$ vertices $A' \subseteq N(u') \setminus \{v\}$ for some $v \in X$, $u' \in N(v)$.

We repeatedly apply Steps 1 and 2. The process terminates when one of two conditions apply: either $k-1-\alpha < 3$ at the start of Step 1, or when $r-1-\alpha < 3$ at the start of Step 2.
In either case, the cops can add an extra vertex to the set of vertices that the cops cannot distinguish, thereby decreasing $\alpha$ by one. We note that $\alpha$ becomes negative when we add this vertex in the case that $k=3$.
This allows us to continue to apply the steps, and we do so until Step 2 is executed with  $r-1 - \alpha = 3$ (and so $\alpha = k-4$).
In this case, after Step 2 finishes, the cops identify that the robber is on a single vertex as $k-1 - (\alpha+2)=1$.
Thus the cops have located the robber.
\end{proof}

Theorem~\ref{thm:pp} implies that projective planes of order $q$ have localization number $q+1.$ As all BIBDs have $r \geq k$, the symmetric $\rm{BIBD}$s with $r=k$ are the BIBDs with the largest $k$ value with respect to $r$. We next consider $\text{BIBD}$s that almost attain the bound; namely, those with $r =k+1$, in which case $(v,b,r,k, 1) = (k^2, k^2 + k,k+1,k,1)$.

\begin{theorem} \label{thm:nearSymUpperBound}
Let $G$ be the incidence graph of a \rm{BIBD}$(v, b,r,k,1)$ with $r = k+1$ and $k \geq 3$. We then have that $\zeta(G)\leq  k+1$.
\end{theorem}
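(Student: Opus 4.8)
The plan is to adapt the two-step induction from the proof of Theorem~\ref{thm:pp} to the asymmetric setting $r=k+1$, playing with $k+1$ cops and decreasing the robber territory. The only structural change from the symmetric case is that points have degree $r=k+1$ while blocks have degree $k$, so the two directions of the induction no longer behave identically, and the extra cop (relative to the $k$ cops used in Theorem~\ref{thm:pp}) is exactly what compensates for the larger point-neighborhoods.

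First I would set up the base case as in Theorem~\ref{thm:pp}: fix $u\in X$, place $r-1=k$ cops on $N(u)$ (leaving one block of $N(u)$ uncovered) and use the remaining single cop to scan $\mathcal{B}\setminus N(u)$. By the scanning argument this forces the robber onto $X$, and when he first lands on a point the probes from $N(u)$ reveal that he occupies one of the $k-1$ points of $N(v)\setminus\{u\}$ for some block $v\in N(u)$ (or a single vertex, in which case he is caught). This is the initial state of the induction.

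The heart of the argument is the two transition steps, which I would phrase in terms of the current number of candidate vertices rather than a single offset, precisely because the reductions now differ. Suppose the robber is confined to a set $A$ of $a$ points inside some $N(v)\setminus\{u\}$. I place $a-1$ cops on $A$, which pins down the point $u'$ the robber departs from (if he stays on $A$ he is caught), and the remaining $(k+1)-(a-1)=k-a+2$ cops on $N(v')\setminus\{u\}$ for a second block $v'\in N(u)$. Since $\lambda=1$, each helper cop $w$ lies in a unique common block with $u'$, and because $v\cap v'=\{u\}$ these flagged blocks are distinct and avoid $v$; as $N(u')\setminus\{v\}$ has $r-1=k$ blocks, the robber is confined to $k-(k-a+2)=a-2$ blocks. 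Running the mirror-image step from a set of $b$ blocks, the robber moves into $N(u')\setminus\{v\}$ with $u'$ a block, which is a set of only $k-1$ points, while the same number $k-b+2$ of helper cops is available; this confines the robber to $(k-1)-(k-b+2)=b-3$ points. Thus one step from the point-side reduces the count by $2$ and one step from the block-side reduces it by $3$, so a full cycle sends a set of $a$ candidate points to $a-5$ candidate points. The robber territory therefore strictly decreases, reaching a single vertex after finitely many rounds, and hence $\zeta(G)\le k+1$.

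The main obstacle I expect is the termination bookkeeping rather than the two reduction steps, which are routine adaptations of Theorem~\ref{thm:pp}. Because the candidate count drops by $2$ and then by $3$, the invariants $a\ge 3$ and $b\ge 3$ needed to justify reserving $a-1$ (respectively $b-1$) pinning cops can fail just before capture, so the endgame must be argued carefully—mirroring the device in Theorem~\ref{thm:pp} of deliberately leaving one extra candidate undistinguished so that the steps continue to apply until exactly one candidate remains. The small cases, in particular $k=3$ (the affine plane of order $3$), should be checked directly to confirm that the counts land on a single vertex.
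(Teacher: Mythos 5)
Your proposal follows the same architecture as the paper's proof (the scanning set-up, then the two-step pinning-plus-helpers induction that shrinks the robber territory), and your Step 1 is correct --- in fact slightly sharper than the paper's, since you use all $k+1$ cops there and shrink $a$ candidate points to $a-2$ candidate blocks, where the paper only claims $a-1$. The genuine gap is in Step 2, at exactly the point where the paper says its proof ``differs in a significant manner'' from Theorem~\ref{thm:pp}. Your Step 1 flagging argument rests on the fact that, since $\lambda=1$, every pair of \emph{points} lies in a common block, so each helper point flags a block of $N(u')\setminus\{v\}$. The mirror-image statement needed in Step 2 --- that every helper \emph{block} through $v'$ meets the departure block $u'$ in a point --- is false once $r=k+1$: two blocks of a non-symmetric design need not intersect. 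Concretely, the $k$ points of $u'$ determine $k$ distinct blocks through $v'$ (one of them being $u$, via the point $v$), so of the $r-1=k$ blocks in $N(v')\setminus\{u\}$ exactly one is disjoint from $u'$; it is the ``parallel'' of $u'$ through $v'$. Moreover, the map sending each block of $N(v)\setminus\{u\}$ to its parallel through $v'$ is a perfect matching, so your $k-b+2$ helpers cannot all avoid the parallels of blocks of $A$: at least two helpers occupy such parallels, and the robber, who knows the placement, departs from a block $u'\in A$ whose parallel carries a helper. That helper probes distance $3$ to every candidate in $N(u')\setminus\{v\}$ and is wasted, so Step 2 leaves $(k-1)-\bigl((k-b+2)-1\bigr)=b-2$ candidates, not $b-3$. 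This one-helper loss is precisely what the paper's Step 2 controls with its partition of $X\setminus\{v\}$ into the sets $X_j$ and the observation that each $X_j$ is disjoint from at most one helper, which is why the paper only guarantees $(\alpha+2)-1$ identifiable candidates.

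The gap is repairable rather than fatal: with the corrected count your cycle takes $a$ candidate points to $a-2$ blocks and then to $a-4$ points, which still strictly decreases, so together with the endgame device you cite (deliberately keeping one extra undistinguished candidate so the invariants $a,b\ge 3$ persist until exactly one candidate remains) the induction still yields $\zeta(G)\le k+1$. But as written, your Step 2 asserts a false flagging claim, and the termination bookkeeping you yourself identified as the delicate part must be redone with a decrement of $4$ per cycle rather than $5$.
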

\begin{proof}
We use the technique of decreasing the robber territory.
The initial set up and Step 1 are analogous to that of Theorem \ref{thm:pp}, and so are omitted. However, we need a slightly different proof for Step 2. Note that $\alpha$ is a parameter that is initialized as $\alpha = 0$, and we have that $\alpha \geq -1$. As before, Step 1 decreases the robber territory from a subset of $X$ of cardinality $k-1-\alpha$ to a subset of $\mathcal{B}$ of cardinality $r-1-(\alpha+2)$.

\emph{Step 2: From $\mathcal{B}$ to $X$.} Suppose that the cops identify that the robber is on a set of $r-1-\alpha$ vertices $A \subseteq N(v) \setminus \{u\}$ for some $u \in \mathcal{B}$, $v \in N(u)$, and $r-1 - \alpha \geq 3$.
The robber takes his turn. If we label the vertices $N(v)$ as $v_1, v_2, \ldots, v_r$, then we can partition $X\setminus\{v\}$ into the disjoint sets $X_i$, where $X_i = N(v_i)\setminus \{v\}$.
The cops place $r-2 - \alpha$ cops on the $r-1 -\alpha$ vertices of $A$ without repetition, and $\alpha +2$ cops on vertices of $N(v') \setminus \{u\}$ without repetition, where $v' \in N(u) \setminus \{v\}$.
If the $r-2-\alpha \geq 2$ cops on $A$ all probe a distance of $1$, then the robber is on $v$.
Otherwise, the robber is on $N(b)\setminus \{v\}$ for exactly one $b \in N(v)$.
If only cop $C_i$ on $A$ probes a distance of $1$, then $b=C_i$.
If all cops on $A$ probe a distance of $3$, then $b$ is the unique vertex of $A$ without a cop.
Further, if we know the robber is in $N(b)\setminus \{v\}$ and one of the $\alpha+2$ cops on $N(v') \setminus \{u\}$, say cop $C_j$, probes a distance of $1$, then the robber is uniquely identified.
Each of the $\alpha+2$ cops on $N(v') \setminus \{u\}$ is disjoint to one $X_j$ for some $j$ in $1 \leq j  \leq r$.
A cop that is disjoint to $X_j$ is also adjacent to exactly one vertex in each $X_{j'}$ for $1 \leq j' \leq r$ and $j' \neq j$.
Also, each $X_j$ is disjoint to at most one cop on $N(v') \setminus \{u\}$.
This means that each set $X_j$ contains $(\alpha+2)-1 = \alpha+1$ vertices that are adjacent to cops in $N(v') \setminus \{u\}$, and so these $\alpha +1$ can be uniquely identified.
To have had avoided capture, the robber must have moved to one of the $(k-1)-(\alpha+1) = k -2 - \alpha$ vertices of $N(b)\setminus \{v\}$ that is not uniquely identifiable by the cops in $N(v') \setminus \{u\}$.
The cops, on their turn, identify that the robber is on a set of $k-2 - \alpha$ vertices $A' \subseteq N(u') \setminus \{v\}$ for some $v \in X$, $u' \in N(v)$. The remainder of the proof is identical to the end of the proof of Theorem~\ref{thm:pp}. \end{proof}

The previous theorem along with the lower bound of Theorem~\ref{thm:genLowerBIBD} yield a difference of one between this upper and lower bound:
\begin{corollary} \label{cor:near-symmetricBIBD}
Let $G$ be the incidence graph of a \rm{BIBD}$(k^2, k^2 + k,k+1,k,1)$ with $k \geq 3$. We then have that $\zeta(G)$ is either $k$ or $k+1$.
\end{corollary}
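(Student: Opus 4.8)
The plan is to sandwich $\zeta(G)$ between $k$ and $k+1$ by invoking the two bounds already established in this section, so that no new game-theoretic argument is required; all the work lies in checking that the hypotheses of the relevant theorems hold for the parameter set $(v,b,r,k,\lambda)=(k^2,k^2+k,k+1,k,1)$. (It is worth recording first that this tuple is a genuine BIBD, since $vr=bk$ and $\lambda(v-1)=r(k-1)$ both reduce to the identity $k^2-1=(k+1)(k-1)$.)

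For the upper bound, I would observe that this design has $r=k+1$, which is precisely the hypothesis of Theorem~\ref{thm:nearSymUpperBound}; together with $k\geq 3$ this gives $\zeta(G)\leq k+1$ at once. For the lower bound, I would apply Theorem~\ref{thm:genLowerBIBD} with the choice $d=k-1$. One needs $k<r$, which holds because $r=k+1$, and then $\alpha=\min\{k,r-k\}=\min\{k,1\}=1$ since $k\geq 3$. It remains to verify the two constraints on $d$: the first, $d\leq \frac{r-2}{\alpha}$, becomes $k-1\leq k-1$, holding with equality; the second, $d<\max\{k,\frac{2r(k-1)-2}{k+1+2(k-1)\alpha}\}$, holds trivially because $d=k-1<k$ and $k$ is one of the two quantities inside the maximum. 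Theorem~\ref{thm:genLowerBIBD} then yields $\zeta(G)>k-1$, i.e. $\zeta(G)\geq k$. Combining, $k\leq \zeta(G)\leq k+1$, so $\zeta(G)$ equals $k$ or $k+1$.

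The only point demanding any attention is the evaluation $\alpha=\min\{k,r-k\}=1$ in the lower bound: it is the small value $r-k=1$, rather than $k$, that makes the threshold $\frac{r-2}{\alpha}=k-1$ just large enough to accommodate $d=k-1$ with equality. Had $\alpha$ been $k$, the choice $d=k-1$ would have failed the first constraint, so confirming which term realizes the minimum is the one genuine (if one-line) check. I would therefore expect this arithmetic verification, not any combinatorial obstacle, to be the sole ``hard part,'' and it is routine.
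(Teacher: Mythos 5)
Your proposal is correct and matches the paper's own (very brief) justification exactly: the paper also obtains the upper bound $\zeta(G)\leq k+1$ from Theorem~\ref{thm:nearSymUpperBound} and the lower bound $\zeta(G)>k-1$ from Theorem~\ref{thm:genLowerBIBD}. Your explicit verification that $d=k-1$ satisfies the constraints of Theorem~\ref{thm:genLowerBIBD} (in particular that $\alpha=\min\{k,r-k\}=1$, so $d\leq\frac{r-2}{\alpha}=k-1$ holds with equality) is the detail the paper leaves implicit, and it is checked correctly.
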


An \emph{affine plane of order} $q$ is a $\text{BIBD}$ with $k=q$ and $r =q+1$ such that the set of blocks partitions into \emph{parallel classes}. Corollary~\ref{cor:near-symmetricBIBD} shows that the localization number of an affine plane of order $q$ is either $q$ or $q+1$. We improve this to give an exact value.

\begin{theorem}\label{affinet}
Let $G$ be the incidence graph of an affine plane of order $k$ with $k \geq 3$. We then have that $\zeta(G) =  k$.
\end{theorem}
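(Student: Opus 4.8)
The plan is to combine the lower bound already in hand with a new upper bound tailored to the parallel-class structure of the affine plane. Since an affine plane of order $k$ is a $\mathrm{BIBD}(k^2,k^2+k,k+1,k,1)$ with $r=k+1>k$, Corollary~\ref{cor:near-symmetricBIBD} (equivalently Theorem~\ref{thm:genLowerBIBD}) already gives $\zeta(G)\geq k$, so the entire task is to improve the upper bound of Theorem~\ref{thm:nearSymUpperBound} from $k+1$ to $k$; that is, to show $\zeta(G)\leq k$. For this I would use the technique of decreasing the robber territory together with the two-step induction of Theorems~\ref{thm:pp} and~\ref{thm:nearSymUpperBound}, alternating between an $X$-territory of collinear points and a $\mathcal{B}$-territory of concurrent lines, but run it with only $k$ cops by exploiting that the $k+1$ lines through a point are exactly one line from each parallel class and that parallelism is an equivalence relation.

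The first place where Theorem~\ref{thm:nearSymUpperBound} spends its extra cop is the step from $\mathcal{B}$ to $X$. There the robber occupies a pencil $A\subseteq N(v)\setminus\{u\}$ of concurrent lines, and after he moves to a point the cops identify his line $\beta\in A$ and then narrow within $N(\beta)\setminus\{v\}$ using auxiliary cops placed on lines through a second point $v'$ of $u$. In the general $r=k+1$ argument one of these auxiliary cops is parallel to (disjoint from) $\beta$ and so probes distance $3$ to every candidate, contributing nothing. In the affine plane I would remove this waste as follows: each line through $v'$ other than $u$ is parallel to exactly one line through $v$, and because parallelism is an equivalence relation I can select the auxiliary cops so that their parallel partners all lie \emph{outside} $A$. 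Then every auxiliary cop meets every line of $A$, no cop is wasted for any choice of $\beta$, and the narrowing achieves the required reduction with one fewer cop, keeping the total at $k$.

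The second, and more delicate, place is the initial set-up, which in the Theorem~\ref{thm:pp} style covers the full pencil $N(u)$ of $r=k+1$ lines (less one) and adds a scanning cop, for a total of $k+1$. Here I would replace the full pencil by a configuration anchored to the parallel classes so that $k-1$ fixed cops together with one scanning cop still force the robber off $\mathcal{B}$ and localize him to at most $k-1$ collinear points of $X$. The point to exploit is that a line not in a given parallel class meets every line of that class, so the fixed cops can read off the parallel class of the robber's line even when they do not lie on it. Once the robber is pinned to collinear points, Step~1 proceeds analogously to Theorem~\ref{thm:pp} (which already uses only $k$ cops and has no parallelism obstruction, since it joins pairs of points), and the two steps are iterated, with the same base-case adjustments as in Theorem~\ref{thm:pp} once the territory drops below three vertices.

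I expect the set-up to be the main obstacle. A naive choice of the fixed cops, namely $k-1$ lines of a single parallel class, fails badly: all $k^2$ lines outside that class produce the identical probe vector, so whenever the robber steps from a point back onto such a line he disperses over this entire family and resets any progress of the scanning cop. The heart of the proof is therefore to design the $k-1$ fixed cops so that the robber's line is always confined to a bounded, scannable set --- combining the pencil's ability to detect a line's parallel class with the parallel class's economy of one cop --- and to verify rigorously, via the robber-territory bookkeeping, that the robber cannot bounce between $X$ and a family of mutually indistinguishable lines indefinitely.
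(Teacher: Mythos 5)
Your lower bound and the overall architecture (a scanning set-up followed by the two-step, territory-decreasing induction) match the paper's proof, but both of the new ingredients you propose have genuine gaps. First, your Step-2 repair is numerically impossible as stated. Work in your labels: the territory is a set $A$ of $m$ lines through the point $v$, none equal to the block $u$; you place $m-1$ cops on $A$, leaving $k-m+1$ auxiliary cops to place. The map sending a line $\ell \ni v'$, $\ell \neq u$, to the unique line through $v$ parallel to $\ell$ is a bijection from the $k$ lines through $v'$ other than $u$ onto the $k$ lines through $v$ other than $u$ (injectivity is exactly transitivity of parallelism). Hence the number of lines through $v'$ whose parallel partner avoids $A$ is $k-m$: one fewer than the $k-m+1$ auxiliary cops you must place. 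So ``select the auxiliary cops so that their parallel partners all lie outside $A$'' cannot be done; at least one auxiliary cop has its partner in $A$, the robber (who knows the placement in advance) arranges to be on that partner, and you are back to the guarantee that Theorem~\ref{thm:nearSymUpperBound} already gives. (Moving the stray cop to a pencil through a third point of $u$ does not help, since you can no longer guarantee it meets the robber's line in a point distinct from those met by the other cops.) The paper's fix is structurally different: the auxiliary cops are taken off pencils entirely and placed on the other lines of the parallel class of the block $u$. Every line of $A$ passes through $v \in u$, hence meets $u$, hence is not in $u$'s class, hence meets every one of those auxiliary lines; and since the auxiliary lines are pairwise parallel, they meet the robber's line in pairwise distinct points, none equal to $v$. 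Thus every auxiliary cop is effective for every choice of the robber's line. (Your corrected, weaker Step 2 would still shrink the territory each cycle and might be pushed through, but you neither notice the shortfall nor do that bookkeeping.)

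Second, the set-up, which you yourself call the heart of the proof, is missing, and the configuration you reject is precisely the one the paper uses, and it works. The paper puts $k-1$ fixed cops on a parallel class $B$ and scans $\mathcal{B}\setminus B$ with the last cop. You object that all $k^2$ lines outside $B$ give the identical probe vector, so the robber can ``reset'' the scan by stepping back onto such a line; but this conflates the set-up phase with the rest of the game. The scanning configuration is used only until the robber first steps onto a point: since lines form an independent set, he cannot change lines without doing so, and the moment he does, the fixed cops localize him (exactly one cop of $B$ probes distance $1$, or all probe $3$, identifying the unique line of $B$ through his point, and the scanner's line eliminates one further point), giving a territory of $k-1$ collinear points. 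From then on the cops run Steps 1 and 2, whose placements already account for the robber moving back onto lines; the cops never return to the scanning configuration, so there is nothing to reset. Relatedly, your stated desideratum that the fixed cops ``can read off the parallel class of the robber's line even when they do not lie on it'' is false (all lines outside $B$ probe distance $2$ to every cop on $B$) and is also not needed: what the parallel class buys is a statement about \emph{points}, namely that $B$ partitions $X$, so a point's probe vector pinpoints which line of $B$ it lies on. Since you reject this set-up and offer no concrete alternative placement, your proposal as written has no initialization for the induction.
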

\begin{proof}
It remains to be shown that $\zeta(G) \leq k$; note that Theorem \ref{thm:genLowerBIBD} performs the lower bound.

We use the technique of decreasing the robber territory.
This proof is similar to the proof of Theorem \ref{thm:nearSymUpperBound}, and we will use some of the techniques found in that proof.
We provide a cop strategy with $k$ cops that secures the capture of the robber.
Let $B\subseteq \mathcal{B}$ be all the blocks from a parallel class, and so contains $k$ vertices.
We put $k-1$ cops on $B$, and the last cop on a vertex of $\mathcal{B} \setminus B$, and apply the scanning technique until the robber is forced to move to $X$ (if the robber remains on $\mathcal{B}$, then the robber's location will eventually be known exactly).
Once the robber moves, the cops are able to detect that the robber is on the $k$ points of $N(v)$, for some $v \in B$.
This is because either one of the cops on $B$ will probe a distance of $1$, in which case $v$ is the cop that probes $1$, or all the cops on $B$ probe a distance of $3$, in which case $v$ is the unique vertex of $B$ that does not contain a cop.
The cops can also capture the robber on one of the points on $N(v)$ (the one adjacent to the last cop, placed on $\mathcal{B} \setminus B$).
The cops now know that the robber is on a set of $k-1$ points, all of which are adjacent to a single vertex of $\mathcal{B}$.

\emph{Step 1: From $X$ to $\mathcal{B}$}. We can use the same cop strategy employed in Step 1 of Theorem~\ref{thm:pp}, however, the results are different as we have less cops.  We initialize $\alpha =0$, and enforce that $\alpha \geq -1$. Although $\alpha$ will typically increase, during the last stage of the proof $\alpha$ may be reduced. In Step 1, we suppose that the cops identify that the robber is on a set of $k-1-\alpha$ vertices $A \subseteq N(v)$ for some $v \in \mathcal{B}$, and $k-1 - \alpha \geq 3$.
Hence, the cops can identify that the robber is on a set of $k - (\alpha+1)$ vertices $A' \subseteq N(u') \setminus \{v\}$ for some $v \in \mathcal{B}$, $u' \in N(v)$.
Step 2 however differs in a significant manner.

\emph{Step 2: From $\mathcal{B}$ to $X$.}
Let $v \in \mathcal{B}$ and $u \in N(v)$.
Suppose that the cops identify that the robber is on a set of $k-1-\alpha$ vertices $A \subseteq N(u) \setminus \{v\}$, where $k-1 - \alpha \geq 3$.
The robber takes his turn.
The cops place $k-2 - \alpha$ cops on the $k-1 -\alpha$ vertices of $A$ without repetition.
Let $B$ be the parallel class that contains the block $v$.
The cops place a further $\alpha+2$ in vertices of $B \setminus\{v\}$.
Note that the blocks in $A$ and $B$ are not parallel, and so each vertex in $A$ and each vertex in $B$ have exactly one element in their common neighborhood.

If the $k-2-\alpha \geq 2$ cops on $A$ all probe a distance of $1$, then the robber is on $v$.
If cop $C_i$ is the only cop on $A$ to probe a distance of $1$, then the robber is on one of the $k-1$ vertices in $N(C_i) \setminus{u}$.
If all cops of $A$ probe a distance of $3$, then the robber is in $N(v') \setminus{u}$, where $v' \in A$ is the unique vertex in $A$ without a cop.

Each vertex in $B$ intersects  $N(v') \setminus{u}$ in exactly one vertex for each $v' \in A$.
Hence, if we know the robber is in $N(v')\setminus \{u\}$ for $v'\in A$ and a cop $C_j$ on $B$ probes a distance of $1$, then the robber is uniquely identified.
To avoid capture, the robber must have moved to one of the $k-1-(\alpha+2) = k - 3 -\alpha$ vertices in $N(v')\setminus \{u\}$ for some $v'\in A$ that are not adjacent to any vertex of $B$.

By applying an analogous argument to Theorem~\ref{thm:pp}, we perform Steps 1 and 2 until the robber is identified on a single vertex, and so the robber is captured.  \end{proof}

\section{Steiner Systems}\label{ss: steiner systems}

In this section, we consider the incidence graph of certain Steiner systems. For $v \ge 7,$ a \emph{Steiner triple system} is a BIBD with $\lambda =1$ and $k=3.$ For a Steiner triple system on $v$ points, denoted by $\rm{STS}(v)$, we have that $v \equiv 1$ or $3 \pmod 6$, $b = \frac{v(v-1)}{6},$ and $r = \frac{v-1}{2}$. Note that an \rm{STS}$(7)$ is isomorphic to a projective plane of order 2, and so has localization number 3 by Theorem~\ref{thm:pp}. An \rm{STS}$(9)$ is the affine plane of order 3, and so has localization number 3 by Theorem~\ref{affinet}. By Theorem~\ref{thm:genLowerBIBD}, we have the following.

\begin{corollary}\label{stsc}
For $v>9,$ the localization number of an \rm{STS}$(v)$ is strictly larger than $\lfloor \frac{v-2}{8} \rfloor$.
\end{corollary}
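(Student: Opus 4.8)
The plan is to apply Theorem~\ref{thm:genLowerBIBD} directly, substituting the parameters of an $\mathrm{STS}(v)$, namely $\lambda = 1$, $k = 3$, and $r = \frac{v-1}{2}$. First I would confirm the hypothesis $k < r$, which reduces to $v > 7$ and so holds throughout; indeed, since $v \equiv 1$ or $3 \pmod 6$, the condition $v > 9$ forces $v \ge 13$. Next I would evaluate $\alpha = \min\{k, r-k\} = \min\{3, \frac{v-7}{2}\}$ and observe that $\frac{v-7}{2} \ge 3$ exactly when $v \ge 13$, so $\alpha = 3$ across the entire range of interest.

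With $\alpha = 3$ and $k = 3$ fixed, the next step is to simplify the two quantities that constrain the admissible $d$. The first constraint becomes $\frac{r-2}{\alpha} = \frac{v-5}{6}$, and a short computation gives $\frac{2r(k-1)-2}{k+1+2(k-1)\alpha} = \frac{2(v-1)-2}{16} = \frac{v-2}{8}$, so the strict upper bound is $\max\{3, \frac{v-2}{8}\}$. The candidate value I would take is $d = \lfloor \frac{v-2}{8}\rfloor$, which is a positive integer for $v \ge 13$.

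It then remains to verify the two required inequalities for this $d$. For the strict bound $d < \max\{3, \frac{v-2}{8}\}$, the key observation is that $v$ is odd (as $v \equiv 1$ or $3 \pmod 6$), so $v-2$ is odd and $\frac{v-2}{8}$ is never an integer; hence $\lfloor \frac{v-2}{8}\rfloor < \frac{v-2}{8} \le \max\{3, \frac{v-2}{8}\}$, giving the strict inequality at once. For the constraint $d \le \frac{v-5}{6}$, I would use $d \le \frac{v-2}{8}$ together with $\frac{v-2}{8} \le \frac{v-5}{6}$, which rearranges to $v \ge 14$; the single remaining admissible case $v = 13$ is checked by hand, where $d = \lfloor \frac{11}{8}\rfloor = 1 \le \frac{8}{6}$.

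The main obstacle, such as it is for a corollary, is entirely bookkeeping: one must be attentive to the parity argument that keeps $\frac{v-2}{8}$ off the integers, since this is exactly what licenses passing to the floor without losing the strict inequality, and to the boundary case $v = 13$, where $\frac{v-2}{8}$ and $\frac{v-5}{6}$ are close enough that $d \le \frac{v-5}{6}$ must be checked directly rather than inferred from $\frac{v-2}{8} \le \frac{v-5}{6}$. Once these checks are in place, Theorem~\ref{thm:genLowerBIBD} yields $\zeta(G) > d = \lfloor \frac{v-2}{8}\rfloor$ immediately.
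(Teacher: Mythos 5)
Your proposal is correct and takes the same route as the paper, which simply cites Theorem~\ref{thm:genLowerBIBD}; you supply the parameter substitution ($k=3$, $r=\frac{v-1}{2}$, $\alpha=3$), the simplification to $\max\bigl\{3,\frac{v-2}{8}\bigr\}$, and the verification that $d=\lfloor\frac{v-2}{8}\rfloor$ satisfies both constraints. Your attention to the parity of $v-2$ (making $\frac{v-2}{8}$ non-integral, hence the strict inequality) and the boundary check at $v=13$ are exactly the details the paper leaves implicit.
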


We can also provide an upper bound.

\begin{theorem}\label{stsv}
The localization number of an \rm{STS}$(v)$ is at most $\frac{v+1}{2}$.
\end{theorem}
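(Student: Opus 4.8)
The plan is to bound $\zeta(G) \le \frac{v+1}{2}$ by exhibiting a delayed-resolving set of size $\frac{v-1}{2} = r$ and invoking the Scanning Lemma (Lemma~\ref{lem:graphscanning}). Recall that in an $\rm{STS}(v)$ every point lies in $r = \frac{v-1}{2}$ blocks, each block has $k=3$ points, and any two points determine a unique block. The natural candidate is to fix a point $u \in X$ and take $S = N(u)$, the set of all $r$ blocks through $u$; I would then argue that $S$ resolves every vertex of $X$, leaving only blocks (an independent set) unresolved, so that $|S|+1 = r+1 = \frac{v+1}{2}$ cops suffice.

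First I would verify the resolution of the point set. If the robber sits on $u$ itself, then all $r$ cops on $N(u)$ probe distance $1$, which uniquely flags $u$ (no other point is adjacent to all blocks through $u$, since a second point shares exactly one block with $u$). If the robber sits on a point $x \ne u$, then because $\lambda = 1$ the pair $\{u,x\}$ lies in exactly one block $B_{u,x}$, so exactly one cop in $N(u)$ (the one on $B_{u,x}$) probes distance $1$, and all the remaining $r-1$ cops probe distance $3$. The crucial point is that the identity of $x$ is recovered from the pattern: the single distance-$1$ cop identifies the block $B_{u,x}$, which contains $u$ and exactly two other points; to separate those two points I would use the fact that $k=3$ and that distinct points $x \ne x'$ determine distinct blocks with $u$, so in fact the map $x \mapsto B_{u,x}$ is injective on $X \setminus (\{u\} \cup B_{u,x})$ — but two points in the \emph{same} block through $u$ would give the same distance vector, which is the obstacle described below.

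The main obstacle is precisely that two points lying in a common block $B \in N(u)$ (other than $u$) produce identical distance vectors to all of $N(u)$: each such point is at distance $1$ from the cop on $B$ and distance $3$ from every other cop in $N(u)$. Since each block through $u$ contains $k-1 = 2$ points besides $u$, the set $N(u)$ alone does \emph{not} resolve $X$; it only narrows the robber to a pair. To fix this I would augment the strategy: rather than $S = N(u)$ outright, I would add a small number of extra point-cops, or more cleanly, observe that the scanning cop (the ``$+1$'' in the Scanning Lemma) together with the structure lets the cops distinguish the two points of each block. Concretely, I expect the clean route is to take $S$ to be $N(u)$ \emph{together with} one point chosen from each block through $u$ — but that inflates $|S|$ beyond $r$, so instead I would verify directly that $N(u)$ is a \emph{delayed}-resolving set in the sense that the unresolved candidates form an independent set. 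The unresolved vertices are pairs of points sharing a block with $u$, which are \emph{not} independent in general, so the honest argument must place cops more carefully.

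Given this, the approach I would actually carry out is to take $S = N(u) \cup \{w\}$ for a well-chosen auxiliary point $w$, or to reuse the machinery of Theorem~\ref{thm:2designs}: applying that theorem with $k=3$ gives a delayed-resolving set whose size I would compare against $r$. The cleanest path, and the one I would commit to, is to show that the cops on $N(u)$ reduce the robber's location on $X$ to within a single block $B$ through $u$, and that a delayed-resolving set is obtained because any residual ambiguity lies among \emph{block} vertices once we account for the scanning cop sweeping the two points of $B$; since there are only $\frac{v-1}{2}$ blocks through $u$, the count $|S| = r = \frac{v-1}{2}$ holds and the Scanning Lemma yields $\zeta(G) \le r + 1 = \frac{v+1}{2}$. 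The hard part will be rigorously justifying that the unresolved set is independent (equivalently, that all residual ambiguity is confined to $\mathcal{B}$), which is where the $\lambda=1$, $k=3$ structure of the Steiner triple system must be used most delicately.
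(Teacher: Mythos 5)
Your proposal has a genuine gap, and it is the one you yourself flag at the end: you never establish that $N(u)$ is a delayed-resolving set, and in fact it is not, so the Scanning Lemma (Lemma~\ref{lem:graphscanning}) cannot be invoked with $|S| = r$. Two corrections to your diagnosis. First, the unresolved point pairs $\{x,x'\}$ lying in a common block through $u$ are automatically an independent set, since the incidence graph is bipartite and points are never adjacent to points; independence of the candidate set does not fail there. Where it genuinely fails is that blocks off $u$ need not be resolved either: two blocks $\{x,y,z\}$ and $\{x',y',z'\}$ whose points lie pairwise in the same three blocks through $u$ receive identical distance vectors from $N(u)$ (all entries $2$ or $4$ in the same positions), and such unresolved blocks are adjacent to unresolved points. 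So the candidate classes are not confined to $\mathcal{B}$, and the robber --- who by the rules knows the cops' entire schedule, including where the roaming cop will probe next --- can oscillate: he sits on $x$ in the ambiguous pair $\{x,x'\}$, and in the round before the roaming cop would probe $x$ (or a block containing exactly one of $x,x'$), he steps onto an adjacent block with an indistinguishable twin, later stepping back down to an ambiguous pair. Your closing suggestion that ``the scanning cop accounts for'' this is precisely the unproven step, and it cannot be repaired within the budget: to separate the pair inside a block $B \ni u$, some fixed cop must occupy one of the two points or a block meeting $B\setminus\{u\}$ in exactly one point, and since a block off $u$ separates at most three such pairs, one needs roughly $r/3$ additional fixed cops, giving about $\frac{2v}{3} > \frac{v+1}{2}$. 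Your fallback via Theorem~\ref{thm:2designs} is also too weak: with $k=3$ it gives $2r+k-3 = v-1$.

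The paper's proof abandons the delayed-resolving-set framework entirely and uses an adaptive strategy. It partitions the points into sets $A$ and $B$ with $|A| = \frac{v+1}{2}$ and $|B| = \frac{v-1}{2}$, places all $\frac{v+1}{2}$ cops on $A$, and then branches on the probe outcome, which reveals how many points of $A$ the robber's block contains. Depending on the case, the cops relocate on the next round to $B$, to $B$ together with a tracked point $p$ (the unique cop that probed distance $1$), or to $A$ minus one point plus $p$; in each branch a short analysis shows the robber is uniquely identified within a couple of rounds. If you want to reach the bound $\frac{v+1}{2}$, you should switch to this kind of case-driven, moving-cop argument rather than a static set plus one scanner.
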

\begin{proof}
We play with $\frac{v+1}{2}$ cops, and show that this is sufficient in order to capture the robber.
We divide the set of points $X$ into two disjoint sets $A$ and $B$ of size $\frac{v+1}{2}$ and $\frac{v-1}{2}$ respectively.
For the cops first move, we place a cop on each vertex of $A$.
If the robber is on a point in $A$ or if the robber is on a block that contains two points of $A$, the robber is captured.

In the case that the robber was on a block that contains three points of $B$, all cops probe a distance of three and so know that this is the case.
In response, for the next robber turn, the robber can only stay on his block, or move to a point of $B$.
In the cops next turn, the cops play on each vertex of $B$. If the robber is not on the same point as a cop, the robber remained on his block. In this case, exactly three cops will probe a distance of $1$ to the robber, which uniquely determines the block that the robber is on, and so the robber is captured.

In the case that the robber was on a block that contains two points of $B$ and one point of $A$, one single cop (say this cop is on point $p$) probed a distance of $1$, and so it is known that the robber is on a block that contains $p$.
The robber takes his turn.
During the cops' next turn, the cops play on each point in $B$ and on the point $p$ (this is $\frac{v+1}{2}$ points for $\frac{v+1}{2}$ cops).
If the robber moved to a point in $B$ or the point $p$, then he is captured (given that we are covering every neighbor of the candidate blocks except for $A$).
These were the only points adjacent to the robber, and so otherwise, the robber must be on a block. There are three cops that probe a distance of $1$ to the robber, which uniquely determines the block that the robber is on, and the robber is captured.

In the case that the robber was on a point of $B$, in their next turn, the cops play on the points of $B$.
The cops can capture the robber using the previous cases and a symmetric argument, except in the case that the robber moved from a point on $B$ to a block that contains two points of $A$ and one point of $B$.
In this case, one single cop on $B$ (say this cop is on point $p$) probed a distance of $1$, and so it is known that the robber is on a block that contains $p$.
The robber takes his turn.
During the cops' next turn, the cops choose a point $a \in A$ and the cops play on each point in $A\setminus \{a\}$ and on the point $p$ (this is $\frac{v+1}{2}$ points for $\frac{v+1}{2}$ cops).
If the robber moved to a point in $A\setminus{a}$ or the point $p$, then he is captured.
Also, if he moved to point $a$, then all the cops on $A\setminus{a}$ and $p$ probe a distance of $2$, which can only occur if the robber is on $a$, so then he is captured.
These were the only points adjacent to the robber, and so otherwise, the robber must be on a block. If there are three cops that probe a distance of $1$ to the robber, which uniquely determines the block that the robber is on, then the robber is captured.
Otherwise, there are two cops that probe a distance of $1$ to the robber, which uniquely determines the block that the robber is on, and so the robber is captured.
\end{proof}

We can also determine an asymptotic upper bound, that improves on Theorem~\ref{stsv} for large values of $v$. We use the notation $\ln{v}$ for the natural logarithm of $v.$

\begin{theorem} \label{thm:upperSTSAlon}
The localization number of an \rm{STS}$(v)$ is at most $v/3 + 2cv^{1/2}(\ln{v})^{3/2}+1,$ where $v/3 + 2cv^{1/2}(\ln{v})^{3/2}+1 \geq 9$ for some absolute constant $c$. In particular, the localization number of an \rm{STS}$(v)$ is at most $(1+o(1))v/3.$
\end{theorem}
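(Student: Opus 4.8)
The plan is to beat the separating-set strategy behind Theorem~\ref{stsv} by combining an efficient, near-perfect cop placement with the territory-decreasing technique of Section~\ref{bibd}, rather than with a single static delayed-resolving set. The key external input is a near-optimal packing result: every $\mathrm{STS}(v)$ admits a \emph{partial parallel class} $C$, that is, a set of pairwise disjoint blocks, whose \emph{leave} $L$ (the points in no block of $C$) satisfies $|L| \le c' v^{1/2}(\ln v)^{3/2}$ for an absolute constant $c'$. This is the Alon-type input that the theorem is named for: the points and blocks form a $3$-uniform, $r$-regular ($r=(v-1)/2$), linear hypergraph, a partial parallel class is exactly a matching in it, and the leave bound follows from nearly-perfect-matching theorems for simple regular hypergraphs. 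Since $|C| = (v-|L|)/3$, the number of points I shall ``cover'' with cops is $|C|+|L| = v/3 + \tfrac{2}{3}|L| \le v/3 + 2c v^{1/2}(\ln v)^{3/2}$ after absorbing the constant; one may assume $v$ is large, as small $v$ is handled by Theorem~\ref{stsv}.

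First I would set up the placement and observe that it \emph{coordinatizes} the graph. On every cop turn the cops occupy all $|C|$ blocks of $C$ and all $|L|$ leave points, holding a bounded reserve of ``hunting'' cops. Because the blocks of $C$ are disjoint and cover $X\setminus L$, a point $p\notin L$ lies in a unique $B_i\in C$, so its distance vector shows a $1$ at the coordinate of $B_i$, a $3$ at the other blocks of $C$, and a $2$ at every leave point; a non-$C$ block instead shows entries in $\{2,4\}$ on the $C$-coordinates and $\{1,3\}$ on the $L$-coordinates. Hence points and blocks can never share a vector, a leave point is captured at once, a point outside $L$ is pinned to the at most three points of its block $B_i$, and a block is pinned to the $O(1)$ blocks meeting $C$ in the same pattern (at most $3^{3}$, since each of its three points lies in $L$ or in one covering block). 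Thus after a single round the robber territory has constant size.

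Next I would run the territory-decreasing argument from this constant-size territory, keeping the cops on $C\cup L$ (so the coordinatization is restored every round) and deploying the reserve on the few live candidates. When the robber is confined to a triple of points I place hunting cops on two of the three, forcing either an immediate capture or a move onto a block that the cops on $C\cup L$ together with the two hunting cops identify among $O(1)$ possibilities; the symmetric step handles a small set of candidate blocks. Since the live set stays bounded by an absolute constant while $v\to\infty$, a constant number of rounds drives it to a single vertex. The total is $|C|+|L|+O(1) = v/3 + 2cv^{1/2}(\ln v)^{3/2}+1$, which is $(1+o(1))v/3$.

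I expect the main obstacle to be twofold. The genuinely hard ingredient is the packing input: no static strategy can reach the main term $v/3$, since a short count shows that a set of blocks that by itself resolves all points must have size at least $(v-1)/2$ (the barrier of Theorem~\ref{stsv}); breaking it requires both the small-leave partial parallel class and the adaptive hunt. The second, more technical difficulty is the bookkeeping in the finishing phase: one must check that the constant-size territory provably shrinks under an optimally playing robber and never blows back up, handling the mixed point/block cases and the rare blocks that meet the leave. This is where a careful case analysis, analogous to Steps 1 and 2 in the proofs of Theorems~\ref{thm:pp} and \ref{affinet}, must be carried out.
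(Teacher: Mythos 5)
Your proposal is correct and follows the paper's proof essentially verbatim: the same Alon--Kim--Spencer near-perfect matching input, the same placement of cops on the blocks of a maximum partial parallel class together with its leave points, and the same finishing phase in which the robber, pinned after one probe to a constant-size candidate set, is caught by a bounded number of additional cop placements. The only cosmetic difference is bookkeeping: the paper redeploys up to nine of its existing cops (hence its hypothesis that the bound is at least $9$) plus one scanning cop for the case of a robber sitting on a matched point, whereas you hold the main placement fixed and absorb a bounded reserve into the constant.
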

\begin{proof}
Let $T$ be a maximum size subset of disjoint triples in the $\rm{STS}(v)$, and let $t = |T|$.
Let $Q$ be the (possibly empty) set of points in $X$ that do not occur in a triple of $T$.
It is known that $t \geq v/3 - cv^{1/2}(\ln{v})^{3/2}$, and so also $|Q| \leq 3cv^{1/2}(\ln{v})^{3/2}$; see \cite{AlonKimSpencer_NearlyPerfectMatchingsRegSimpHypergraphs}.
The main idea of this technique is to place a cop on each block of $T$ and place a cop on each point of $Q$, which uses $v/3 + 2cv^{1/2}(\ln{v})^{3/2}$ cops.
We will  then use another cop to perform the scanning technique. Some parts of the proof will also require the number of cops to be at least $9$.

In their first turn, the cops place a cop on each block of $T$ and place a cop on each point of $Q$.
If the robber is on a point of $Q$, then some cop probes a distance of $0$, so the robber is located. If the robber is on a block that contains three points $q_1,q_2,q_3$ of $Q$, then the three cops on $q_1,q_2,q_3$ are the only cops that probe a distance of $1$, and robber is located to be on the block $\{q_1,q_2,q_3\}$.
If the robber is on a block that contains two points $q_1,q_2$ of $Q$, then the two cops on $q_1,q_2$ are the only cops of $Q$ that probe a distance of $1$, then the robber is located to be on the unique block $\{q_1,q_2,p\}$, for some unique point $p$.

 If the robber is on a block that contains only one point $q$ of $Q$, then the cop on $q$ is the only cop of $Q$ that probes a distance of $1$.
 The block that contains the robber is then of the form $\{q,t_1,t_2\}$, where $t_1,t_2$ are points that each occur in triples of $T$.
 Suppose that the triples containing $t_1$ and $t_2$ are $\{t_1,r,s\}$ and $\{t_2,t,u\}$.
 We can assume without loss of generality that $\mathcal{B}$ contains the blocks $\{q,t_1,t_2\}$, $\{q,r,t\}$, or $\{q,s,u\}$.
 The cops know that the robber is on one of these blocks.
 In the robber's next move, he will only be able to stay on the same block, or move to one of the points in $\{q,t_1,t_2,r,s,t,u\}$.
 In the cops' next turn, they place seven cops on the points in $\{q,t_1,t_2,r,s,t,u\}$, which is able to capture the robber in either case.

If the robber is on a block that contains no points of $Q$, then the block that contains the robber is then of the form $\{t_1,t_2,t_3\}$, where $t_1,t_2,t_3$ are points that each occur in triples of $T$.
 Suppose that the triples containing $t_1$, $t_2$, and $t_3$ are $\{t_1,r,s\}$, $\{t_2,t,u\}$, and $\{t_3,v,w\}$.
 In the robber's next turn, he can either stay on the same triple, or move to a point in $\{t_1,t_2,t_3,r,s,t,u,v,w\}$.
 In the cops' next turn, they place nine cops on the points in $\{t_1,t_2,t_3,r,s,t,u,v,w\}$, which is able to capture the robber in either case.

 The only case left to consider is if the robber is on a point contained in a block $\{t_1,t_2,t_3\}$ of $T$.
 In this case, we assign an additional cop to probe the vertices $t_1$, $t_2$, and $t_3$ over the next three turns or until the robber moves to a block.
 If the robber does not move, then he will be captured. Otherwise, the robber moves to a block, and is captured using a previous case.
\end{proof}

A \emph{Steiner system} $S(t,k,v)$ with $2 \leq t <k<v$ is a set $X$ of size $v$ along with a set of blocks $\mathcal{B}$ with each block of size $k$ such that every $t$-subset of $V$ occurs in exactly one block.
A \emph{Steiner quadruple system} is an $S(3,4,v)$. The \emph{repetition number} of a point of an $S(t,v,k)$, written $D$, is the number of blocks that any one point occurs in. Note that $D=\binom{v-1}{t-1} / \binom{k-1}{t-1}.$

\begin{theorem}\label{stsq}
If $v\ge 6,$ then the localization number of an $S(3,4,v)$ is at most $v-3$.
\end{theorem}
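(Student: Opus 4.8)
The plan is to use the decreasing-robber-territory technique, following the same overall architecture as the proofs of Theorems~\ref{thm:pp}, \ref{thm:nearSymUpperBound}, and \ref{affinet}, but adapted to the larger block size $k=4$ of a Steiner quadruple system. Since an $S(3,4,v)$ is a $3$-balanced design, the relevant parameters are block size $k=4$ and repetition number $D=\binom{v-1}{2}/\binom{3}{2}=\binom{v-1}{2}/3$, which plays the role of $r$ (the degree of a point vertex). The incidence graph is bipartite with point vertices of degree $D$ and block vertices of degree $4$. I would give an explicit cop strategy using $v-3$ cops that, once the robber is forced onto a point-vertex, steadily shrinks the set of candidate vertices until only one remains.

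First I would set up the initial phase, as in the earlier proofs: use most of the $v-3$ cops to cover a large portion of $X$ and one cop to scan, forcing the robber either to reveal his location directly or to move onto a point and be pinned to a small candidate set. Because $k=4$, the key structural fact I would exploit is that two distinct blocks meet in at most two points (since a Steiner quadruple system is only $3$-balanced, two blocks can share a pair), and that any $3$-subset of points lies in exactly one block. The second of these is what lets the cops, after locating the robber on a neighborhood $N(v)$ for some block $v$, use probes to disambiguate: placing cops on points and reading off which ones are at distance $1$ identifies which block the robber occupies, and placing cops on the candidate points themselves forces the robber off all but a controlled number of them. The budget $v-3$ should be exactly what is needed to cover ``all points but a few'' so that in each round the candidate set strictly shrinks, using the intersection structure to argue that each cop placed in $X$ or $\mathcal{B}$ resolves at least one new candidate.

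The heart of the argument is the inductive step analogous to Steps~1 and 2 above. I would maintain the invariant that the robber is known to lie on a set of candidate points all contained in a single block-neighborhood (or a small union of such), and show that by placing $v-3$ cops appropriately---covering all candidate points but one, plus auxiliary cops on a second block to break ties via the unique-block-through-a-triple property---the candidate set on the next round is strictly smaller. Iterating drives the candidate count down to $1$. The bound $v-3$ rather than something tighter presumably comes from the fact that we must always leave room: we cannot cover every point (that would just be the metric-dimension bound), and the $3$ saved corresponds to the block size $k=4$ minus $1$, reflecting that a single block's worth of points can be resolved ``for free'' by the distance pattern of cops on a parallel or intersecting structure.

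\textbf{The main obstacle} I anticipate is handling the ambiguity created by the weaker intersection condition: unlike the $\lambda=1$ BIBD case where two blocks meet in at most one point, here two blocks of an $S(3,4,v)$ may share a pair of points. This means a cop placed on a block-vertex can be at distance $1$ from two candidate points simultaneously, so the clean ``each cop resolves exactly one vertex'' accounting from Theorem~\ref{thm:pp} no longer holds verbatim. I would need to track these shared pairs carefully and argue that even with this double-counting the $v-3$ cops suffice, most likely by leaning on the global bound $v-3$ (covering nearly all of $X$) rather than trying to optimize the count block-by-block, so that the shared-pair ambiguities are absorbed by the sheer number of covered points. Verifying the small cases near $v=6$ at the boundary of the bound is a secondary technical point I would check separately.
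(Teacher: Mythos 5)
Your proposal has a genuine gap, and in fact the architecture you chose does not transfer to this setting. The inductive ``Step 1 / Step 2'' machinery of Theorems~\ref{thm:pp}, \ref{thm:nearSymUpperBound}, and \ref{affinet} depends on the cop budget being comparable to the degrees on \emph{both} sides of the incidence graph: when the robber's candidate set is $N(u)$ for a point $u$, those proofs place cops on all but one vertex of a block neighborhood, which is feasible because $r=k$ or $r=k+1$. In an $S(3,4,v)$ a point has degree $D=\binom{v-1}{2}/3=\Theta(v^2)$, which dwarfs the budget of $v-3$ cops, so the move ``cover all but one of the candidate set and shrink it'' is simply unavailable once the candidates live in $\mathcal{B}$. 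You never carry out the inductive step, and you yourself flag the pair-sharing obstacle (two blocks may meet in two points) without resolving it; this is precisely where the accounting breaks. Your proposed initial phase makes the problem concrete: with ``most'' cops covering $X$ and one cop scanning, at least four points $x,y,z,w$ are uncovered, and then two blocks of the form $\{x,y,a,b\}$ and $\{z,w,a,b\}$ (which may coexist, since they share only the pair $\{a,b\}$) give identical distance vectors --- distance $1$ from the cops on $a,b$ and distance $3$ from all others --- so your covering set is not even delayed-resolving and the robber escapes at round one.

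The paper's proof needs no induction at all. Place the $v-3$ cops on $X\setminus\{x,y,z\}$ for a fixed triple $\{x,y,z\}$. Every block vertex is then resolved in a single probe: the cops at distance $1$ are exactly the block's points inside the cop set, and this trace determines the block uniquely --- a trace of size $3$ or $4$ determines it by the unique-block-through-a-triple property, a trace of size $1$ forces the block to be the unique block through $\{x,y,z\}$, and a trace $\{a,b\}$ of size $2$ determines it because two distinct blocks each containing $\{a,b\}$ and two of $\{x,y,z\}$ would share a triple. Note this is exactly why three uncovered points is the threshold, matching your heuristic $k-1=3$ but for a sharper reason than you give. Finally, a robber hiding on one of $x,y,z$ is handled not by a scanning cop (which would cost $v-2$ total) but by re-placing all the cops on $X\setminus\{x',y',z'\}$ for a disjoint triple: the robber must either stay on a point now occupied by a cop or move to a block, where the first argument catches him. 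Your write-up contains the germ of the right mechanism (``placing cops on points and reading off which ones are at distance $1$ identifies which block the robber occupies''), but without the one-placement resolving claim and the re-placement trick, what you have is a plan whose central step fails as stated.
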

\begin{proof}
Consider any three points from $X$, say $\left\{x, y, z\right\}$. The cops place themselves on every vertex in $X\setminus \left\{x, y, z\right\}$. We will show that the cops have a winning strategy from this placement.

If the robber is on a vertex in $\mathcal{B}$, then the three points $\left\{x, y, z\right\}$ are contained in a unique block of $\mathcal{B}$, say $(x,y,z,w)$. If the robber is located on this block, then the probe will return a distance of $1$ for the cop on $w\in X$ and a distance of $3$ for all other cops.
Next, if the robber is located on a block with a pair of points from $\left\{x, y, z\right\}$, suppose without loss of generality the block $\{x,y,a,b\}$ with $z \notin \{a,b\}$, then the cops on $a$ and $b$ both probe a distance of $1$ and all other cops probe a distance of $3$, which identifies the robber on residing on a block containing $\{a,b\}$ and two out of three points of $\left\{x, y, z\right\}$. There can only be one block that satisfies this constraint, and so the robber is located.

All other blocks have at least three cops at distance $1$ and thus, the location of the robber would be identified.
No matter which block the robber occupies, he will be identified by the cops with this placement.

We next consider the cases that the robber is on a vertex in $X$. If the robber is on a point occupied by a cop, then he is immediately identified. Otherwise, the cops send a second probe, this time replacing $\left\{x, y, z\right\}$ with three different vertices to omit, say  $\left\{x', y', z'\right\}$, and occupy all vertices of $X \setminus  \left\{x', y', z'\right\}$. This forces the robber off of his location, to a block in $\mathcal{B}$ which will be immediately identified by the above argument.\end{proof}

We finish the section with asymptotic bounds for Steiner systems.
\begin{theorem}\label{stsss}
The localization number of an $S(2,k,v)$ is at most $$v/k + O(v^{1-\frac{1}{k-1}})=(1+o(1))v/k.$$  The localization number an $S(t,k,v)$ is at most $$v/k + O(v \cdot D^{-\frac{1}{k-1}})=(1+o(1))v/k.$$
\end{theorem}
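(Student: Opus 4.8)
The plan is to generalize the disjoint-triple strategy of Theorem~\ref{thm:upperSTSAlon} to blocks of size $k$. The key structural input is a large partial parallel class: a family $T$ of pairwise disjoint blocks covering all but a small set $Q$ of points. For an $S(2,k,v)$ the relevant result is that a nearly perfect matching exists in the block hypergraph, giving $|T| \geq v/k - O(v^{1-1/(k-1)})$ and hence $|Q| = O(v^{1-1/(k-1)})$; this is exactly the regime controlled by the Alon--Kim--Spencer / R\"odl-nibble bounds already cited for $k=3$ in~\cite{AlonKimSpencer_NearlyPerfectMatchingsRegSimpHypergraphs}. For a general $S(t,k,v)$, each point lies in $D = \binom{v-1}{t-1}/\binom{k-1}{t-1}$ blocks, the block hypergraph on $X$ is $D$-regular, and the same near-perfect-matching machinery yields a partial parallel class leaving an uncovered set of size $O(v \cdot D^{-1/(k-1)})$. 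I would state these two covering facts as the first step, citing the hypergraph matching literature, since the $v/k$ main term comes from $|T| \approx v/k$ and the error terms come directly from $|Q|$.

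Next I would set up the cop placement exactly as before: place one cop on each block of $T$ and one cop on each point of $Q$, using $|T| + |Q| = v/k + O(v^{1-1/(k-1)})$ cops (respectively $v/k + O(v\cdot D^{-1/(k-1)})$), and reserve a constant number of additional cops (depending only on $k$, or on $t$ and $k$) to finish. The verification then proceeds by cases according to how many points of $Q$ the robber's block meets. If the robber sits on a point of $Q$ it is found immediately; if its block meets $Q$ in two or more points, the cops on those $Q$-points probe distance $1$ and, because any $t$ points (here even $2$ points suffice) lie in at most one block, the block is uniquely pinned down. The only genuinely interesting case is when the robber's block meets $Q$ in at most one point, so that it is essentially a block spanned by points covered by $T$.

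For that case I would mirror the endgame of Theorem~\ref{thm:upperSTSAlon}: the cops know the robber lies on one of a bounded list of blocks, all incident to a bounded set of points --- namely the $k$ points of the robber's block together with the $\le k$ points of each of the (at most $k$) matching-blocks of $T$ meeting it, a set of size $O(k^2)$. In the robber's next move he can only stay put or step to one of these $O(k^2)$ points, so the reserved cops, numbering $O(k^2)$ (a constant once $k$ is fixed), can cover all those points and capture him, with a final single scanning cop handling the sub-case where the robber sits on a point interior to a block of $T$ and refuses to move. Since the number of reserved and scanning cops is a constant absorbed into the lower-order term, the total is $v/k + O(v^{1-1/(k-1)})$ for $t=2$ and $v/k + O(v\cdot D^{-1/(k-1)})$ in general, which is $(1+o(1))v/k$ in both cases.

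The main obstacle I anticipate is twofold. First, one must confirm that the near-perfect-matching bound applies in the correct regime and yields the claimed error exponent $1-1/(k-1)$ (equivalently $D^{-1/(k-1)}$); this requires checking that the pair-degree (or $(t{-}1)$-degree) codegree of the block hypergraph is small enough for the R\"odl nibble, and translating the standard statement into the stated error term. Second, the finishing case analysis must be shown to use only $O_k(1)$ extra cops uniformly --- the count of candidate blocks through a fixed block grows with $k$ but not with $v$, so the argument must be phrased so that every bounded-size point set the robber can reach is covered by a number of cops depending only on $t$ and $k$. Once both points are secured, the rest is the same routine distance-vector bookkeeping already exhibited for the Steiner triple case.
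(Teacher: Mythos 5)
Your proposal takes essentially the same route as the paper: the paper's own proof of Theorem~\ref{stsss} is a one-line reduction to the strategy of Theorem~\ref{thm:upperSTSAlon}, invoking precisely the Alon--Kim--Spencer near-perfect matching bounds you cite (a maximum partial parallel class $T$ with uncovered set $Q$ of size $O(v^{1-\frac{1}{k-1}})$, respectively $O(v\cdot D^{-\frac{1}{k-1}})$), with cops on the blocks of $T$ and the points of $Q$ plus $O_k(1)$ reserved cops for the endgame. One small correction: for $t\geq 3$, two points of $Q$ do \emph{not} determine a unique block, so the case where the robber's block meets $Q$ in $2\leq j<t$ points must be handled by your bounded-point-set endgame (the candidate blocks still lie inside the $j$ identified $Q$-points together with the points of the at most $k-j$ intersected blocks of $T$) rather than by immediate identification --- but since that endgame covers this case with the same $O(k^2)$ reserved cops, the argument stands.
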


\begin{proof} The proof is analogous to Theorem~\ref{thm:upperSTSAlon}, using the result that any Steiner system $S(2, k, n)$ contains a matching that covers all vertices but at most $O(v^{1-\frac{1}{k-1}})$ and that any Steiner system $S(t, k, n)$ contains a matching that covers all vertices but at most $O(n\cdot D^{-\frac{1}{k-1}})$; see \cite{AlonKimSpencer_NearlyPerfectMatchingsRegSimpHypergraphs}.
\end{proof}

\section{Further directions}\label{ss: transversal designs}

We studied the localization number of incidence graphs of designs, and gave bounds on the localization number of balanced incomplete block designs. Exact values of the localization number are given for the incidence graphs of projective and affine planes. Bounds were given for Steiner systems.

There are a number of further directions when studying the localization number of graphs derived from designs. An interesting question is to find tight bounds on the localization number of families of Steiner systems. The localization number has yet to be considered for block intersection graphs, point graphs, or Latin square graphs. We will consider polarity graphs (which are graphs defined using polarities on projective planes) in the sequel.

We finish with a discussion of the localization number of transversal designs. A \emph{transversal design} $TD(k,n)$ of block size $k$ and group size $n$ is a pair $(X,\mathcal{B})$ such that the following hold.
\begin{enumerate}
\item $X$ is set of $kn$ points partitioned into groups $G_i$ for $1 \leq i \leq k$, with each group of size $n$;
\item $\mathcal{B}$ is a set of blocks of size $k$; and
\item Each block contains exactly one element in $G_i$, for each $1 \leq i \leq k$.
\end{enumerate}

Famously, the existence of a $TD(n+1,n)$ is equivalent to the existence of a projective plane of order $n$: given a projective plane of order $q$, if we pick a point and remove all blocks that contain that point, then we have a $TD(q+1,q)$. In a similar fashion, if we remove any parallel class in an affine plane of order $q$, then we are left with a $TD(q,q)$. We note that although we know the exact localization numbers of affine planes and projective planes, this does not imply anything directly about the localization numbers of a $TD(k,n)$, as the removal or addition of blocks and points has an unknown effect. We do have the following upper bound.

\begin{theorem}\label{td}
If $G$ is the incidence graph of a $TD(k,n)$ with $k\geq 4$, then $\zeta(G) \leq n+k-4$.
\end{theorem}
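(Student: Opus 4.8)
The plan is to use the technique of decreasing the robber territory, following the two-step induction developed in Theorems~\ref{thm:pp} and \ref{affinet}, with the group structure of the $TD(k,n)$ supplying a convenient anchor. First I would record the distance geometry of the incidence graph $G$: every point has degree $n$ and every block has degree $k$; two points in distinct groups lie on a unique common block (so are at distance $2$), while two points in the same group lie on no common block (so are at distance $4$); every block not through a given point is at distance $3$ from it; and $G$ has girth $6$. The key structural fact I would isolate is the \emph{pencil} through a point $u \in G_1$, namely the set $\Pi_u = N(u)$ of the $n$ blocks containing $u$. These blocks pairwise meet only in $u$, so their points outside $G_1$ are pairwise disjoint, and since they number $n(k-1) = |G_2 \cup \cdots \cup G_k|$, they \emph{partition} $G_2 \cup \cdots \cup G_k$.

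The strategy is to place $n$ cops permanently on $\Pi_u$ and to retain the remaining $(n+k-4)-n = k-4$ cops as auxiliaries; this count is the reason for the hypothesis $k \ge 4$. The permanently placed pencil acts as the anchor that is reprobed every round, and its effect is immediate on the bulk of the graph: a robber on any point of $G_2 \cup \cdots \cup G_k$ lies in exactly one block of $\Pi_u$, so the single cop at distance $1$ localizes him to the $k-1$ non-$G_1$ points of that block; a robber on a block of $\Pi_u$ is captured outright; a robber on $u$ gives the all-ones vector and is identified; and a robber on an out-pencil block $C$ returns the set of the $k-1$ distinct pencil blocks that $C$ meets at distance $2$, a useful signature. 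Thus, once the pencil is placed, the robber is always confined either to a set of at most $k-1$ points lying on a single block, to the class $G_1 \setminus \{u\}$, or to a family of out-pencil blocks sharing a signature.

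With the pencil as anchor I would then run the two-step induction of Theorem~\ref{thm:pp}, using the $k-4$ auxiliary cops as the free cops that carry out the within-block resolution and the scanning. As in that proof, when the robber occupies a known set $A$ of points on one block I place auxiliaries on all but a few vertices of $A$ (capturing him if he stays, and otherwise revealing which vertex he left), while the reused pencil re-localizes him after he moves to a block; the roles of points and blocks then swap, and the candidate set is driven down by the $\alpha$-shifting device until the endgame reduces it to a single vertex, exactly as in Theorems~\ref{thm:pp} and \ref{affinet}. The scanning cop (Lemma~\ref{lem:graphscanning}) is used to sweep the class $G_1 \setminus \{u\}$: the all-threes signature tells the cops the robber is in this class, and while he remains there he is eventually visited, so he must move to an out-pencil block, where the anchor re-engages.

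The main obstacle is precisely the two large classes that the pencil fails to split: the $n-1$ points of $G_1 \setminus \{u\}$, and the out-pencil blocks, two of which are indistinguishable to the anchor whenever they meet the same set of pencil blocks (possibly in different points). The crux of the argument will be to show that the $k-4$ auxiliaries together with the scanning cop resolve these over successive rounds without letting the robber territory grow --- that a robber leaving a $(k-1)$-point set can only reach out-pencil blocks whose signatures are controlled, and that the apparent deficit relative to a one-shot resolution is absorbed by the endgame reduction of $\alpha$, just as in Theorem~\ref{thm:pp}. Verifying these transition bounds, and in particular that escaping into $G_1 \setminus \{u\}$ never buys the robber more than a bounded delay, is where I expect the real work to lie.
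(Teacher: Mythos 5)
Your proposal has a genuine gap, and the strategy as described fails outright for small $k$. After freezing $n$ of the $n+k-4$ cops permanently on the pencil $\Pi_u = N(u)$, you have only $k-4$ mobile cops, which is zero when $k=4$ (and your ``scanning cop'' has no room in the budget at all unless it is one of these auxiliaries). But the frozen pencil cannot resolve anything by itself: every point of $G_1\setminus\{u\}$ is at distance exactly $3$ from every block of $\Pi_u$, so all $n-1$ of these points share one distance vector forever, and the robber wins by standing still on one of them. The same problem occurs inside a single pencil block: the candidate set $N(v)\setminus\{u\}$ consists of $k-1$ points, one per group, and any single cop position (point or block) separates at most one of these candidates from the rest; hence no matter where the $k-4$ auxiliaries stand, at least three candidates retain identical distance vectors, and the pencil assigns all of them the same signature as well. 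This is precisely why you cannot simply ``run the two-step induction of Theorem~\ref{thm:pp}'' here: that induction repositions all $k$ cops every round ($k-2-\alpha$ on the candidate set plus $\alpha+2$ elsewhere), whereas your anchor leaves far fewer mobile cops, and you give no argument --- you explicitly defer it --- that the pencil's information compensates for the deficit.

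The paper's proof avoids a frozen anchor entirely and wins in a bounded number of rounds with all cops mobile: first scan with $n-1$ cops on $G_1$ plus one rover to force the robber onto $\mathcal{B}$ and identify the point $u\in G_1$ adjacent to him; then place $n$ cops on $N(u)$ to identify the block $v\in N(u)$ near the robber, localizing him to $N(v)\setminus\{u\}$; finally --- and this is the step your outline is missing --- place $k-2$ cops on $N(v)\setminus\{u\}$ and $n-2$ cops on $G_1\setminus\{u\}$ simultaneously (exactly $n+k-4$ cops). If the robber stays on a point he is caught; if he flees to a block $B$, the cops read off the unique $u'\in N(v)\setminus\{u\}$ with $B\in N(u')$ and the unique $u''\in G_1\setminus\{u\}$ with $B\in N(u'')$, and since $u'$ and $u''$ lie in different groups of the transversal design they lie on exactly one common block, so $B$ is determined. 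This cross-group ``two points in different groups determine a unique block'' step is what makes the tight budget $n+k-4$ suffice; it has no counterpart in your plan, and without it (or a worked-out substitute for the transitions you left unverified) the proposal does not establish the bound.
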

\begin{proof}
We play the game with $n+k-4$ cops, and show that the robber can be captured.
Note that there are at least $n$ cops.
For their first move, the cops place $n-1$ cops on the $n$ vertices of $X$ that correspond to a group $G_1$ of the transversal design.
One further cop is placed on another vertex of $X$.
Until the robber is detected on $\mathcal{B}$ (by any cop probing an odd distance to the robber), this last cop moves along the vertices of $X$, forcing the robber to move to $\mathcal{B}$ at some point, by the scanning technique.

When the robber moves to $\mathcal{B}$, there is exactly one vertex $u\in G_1$ that has distance $1$ to the robber. If a cop is on $u$, then this cop probes a distance of $1$ and all others on $G_1$ probe a distance of $3$.
Otherwise, all cops on $G_1$ probe a distance of $3$, and $u$ is the unique vertex in $G_1$ without a cop.
The robber is discovered to be on a vertex of $N(u)$.
The robber takes his turn.

The cop player places $n$ cops on the $n$ vertices in $N(u)$.
If all of these cops probe a distance of $1$ to the robber, then the robber is captured on $u$.
Otherwise, exactly one of these cops will probe a distance of $1$ to the robber, say at a vertex $v \in N(u)$.
Thus, the robber is known to reside on one of the $k-1$ vertices in $N(v)\setminus \{u\}$.
The robber takes his turn. 

The cops place $k-2$ cops on the $k-1$ vertices in $N(v)\setminus \{u\}$ and $n-2$ cops on the $n-1$ vertices of $G_1 \setminus \{u\}$.
If the robber did not move from $N(v)$, then he is captured as either some cop on $N(v)$ probes a distance of $0$, or all cops on $N(v)$ probe a distance of $2$ implying the robber is on the unique vertex of $N(v)$ without a cop.
As before, the cops of $N(v)$ can identify the unique $u' \in N(v)\setminus \{u\}$ such that the robber is on $N(u')$.
The cops on $G_1$ can also identify the unique $u'' \in G_1\setminus \{u\}$ such that the robber is on $N(u'')$.
Note that $N(v)\setminus \{u\}$ and $G_1\setminus \{u\}$ are disjoint subsets of $X$.
Hence, any vertex in  $N(v)\setminus \{u\}$ has distance $2$ from any vertex in $G_1\setminus \{u\}$ and further there is exactly one vertex of $\mathcal{B}$ in their common neighborhood, as each pair of points in different groups of the transversal design are in exactly one block.
Hence, the robber is located as being on the unique vertex in $N(u') \cap N(u'')$.
\end{proof}

We do not know if the bound in Theorem~\ref{td} is tight.

\end{document}